\NeedsTeXFormat{LaTeX2e} 

\documentclass{amsart}
\usepackage{amsfonts}
\usepackage{amsmath,amsthm}
\usepackage{amsfonts,amssymb}
\usepackage{CJK}
\usepackage{enumerate}

\hfuzz1pc 



\newtheorem{thm}{Theorem}[section]
\newtheorem{cor}[thm]{Corollary}
\newtheorem{lem}[thm]{Lemma}

\theoremstyle{remark}
\newtheorem{rem}[thm]{Remark}

\numberwithin{equation}{section}

\newcommand{\al}{\alpha}

\def\vz{\varepsilon}
\def\oz{\omega}
\def\lz{\lambda}

\def\az{\alpha}

\def\Gz{\Gamma}

\def\({\Bigl(}
\def \){ \Bigr)}

 \def\a{{\alpha}}

 \def\ab{{\mathbf a}}
 \def\bb{{\mathbf b}}

 \def\kb{{\mathbf k}}

 \def\xb{{\mathbf x}}
 \def\yb{{\mathbf y}}

 \def\RR{{\mathbb R}}

\def\ss{{\Bbb S}^{d}}

\def\bi{\bibitem}

\def\va{\varepsilon}

\def\bi{\bibitem}

\begin{document}
\def\RR{\mathbb{R}}
\def\Exp{\text{Exp}}
\def\FF{\mathcal{F}_\al}

\title[] {Strong equivalences   of approximation numbers and tractability of weighted anisotropic Sobolev embeddings}

\author[]{ JiDong Hao, Heping Wang} \address{ School of Mathematical Sciences, BCMIIS, Capital Normal
University, Beijing 100048,
 China.}
\email{1047695025@qq.com(D. Hao). wanghp@cnu.edu.cn(H. Wang).}

\keywords{Strong equivalences, Tractability, Approximation
numbers, Weighted anisotropic spaces, Analytic Korobov space}
\subjclass[2010]{41A25, 41A63, 65D15, 65Y20}

\begin{abstract} In this paper,
 we study multivariate approximation defined over weighted  anisotropic Sobolev spaces which depend on two
 sequences $\ab=\{\ab_j\}_{j\geq1}$ and $\bb=\{\bb_j\}_{j\geq1}$ of positive numbers.
We  obtain    strong equivalences  of the approximation numbers,
and  necessary and sufficient conditions on $\ab$,
 $\bb$ to achieve various notions of  tractability of
the weighted  anisotropic Sobolev  embeddings.
\end{abstract}

\maketitle
\input amssym.def

\section{Introduction}
\

\begin{CJK*}{GBK}{song}
\end{CJK*}

This paper is  devoted to investigating  sharp constants of
approximation numbers and tractability of embeddings of weighted
anisotropic Sobolev spaces on   $[0,1]^d$  into $L_2([0,1]^d)$.
The approximation numbers of a bounded linear operator
$T:X\rightarrow Y$ between two Banach spaces are defined as
\begin{align*}\label{1.1}
a_n(T:X\rightarrow Y):&=\inf_{{\rm rank} A< n}\sup_{\|x\|_X\leq 1}\|Tx-Ax\|_Y\notag\\
&=\inf_{{\rm rank} A< n}\|T-A\|_{X\rightarrow Y},\ \ n\in \Bbb N.
\end{align*}
 They
describe the best approximation of $T$ by finite rank operators.
If $X$ and $Y$ are Hilbert spaces and $T$ is compact, then $a_n(T
)$ is the $n$-th singular number of $T$. Also $a_n(T)$ is just the
$(n-1)$-th minimal worst-case error  with respect to arbitrary
algorithms and general linear information in the Hilbert setting.

 Recently,  K\"uhn and many other
authors investigated and obtained  strong equivalences,
preasymptotics, asymptotics of the approximation numbers and
tractability of the classical isotropic Sobolev embeddings,
Sobolev embeddings of dominating mixed smoothness,  Gevrey space
embeddings,  anisotropic Sobolev embeddings on the torus $\Bbb
T^d=[0,2\pi]^d$ (see \cite{KSU1, KSU2, KMU, CW}), and Sobolev
embeddings  and Gevrey type  embeddings on the sphere $\ss$ and on
the ball $\Bbb B^d$ (see \cite{CW1}). In \cite{WW} Werschulz and
Wo\'zniakowski investigated tractability of  weighted isotropic
Sobolev embeddings.

We note that unlike on the torus $\Bbb T^d=[0,2\pi]^d$, the
anisotropic spaces $W_2^{\bb}([0,1]^d)$ on the torus $[0,1]^d$
naturally induce weighted norms given in terms of Fourier
coefficients (see \eqref{2.4} in Section 2.1), where
$\bb=\{b_j\}_{j\ge 1}$ is a  positive sequence describing
smoothness index of each variable. In this paper we consider
general  weighted anisotropic Sobolev spaces
$W_2^{\ab,\bb}([0,1]^d)$ on the torus $[0,1]^d$ whose definitions
are given  in Section 2.1, where the positive sequence
$\ab=\{a_j\}_{j\ge1}$ is a scaling parameter sequence, and
$\bb=\{b_j\}_{j\ge 1}$ is a smoothness parameter sequence. We
discuss the approximation numbers and tractability of the weighted
anisotropic Sobolev embedding
\begin{equation}\label{1.1}I_d: \,W_2^{\bf a,b}([0,1]^d)\longrightarrow L_2([0,1]^d),\ \end{equation}  where $I_d$ is the
identity (embedding) operator.

 We obtain strong equivalences of the
approximation numbers $a_n(I_d)\equiv a_n(I_d: \,W_2^{\bf
a,b}([0,1]^d)\to L_2([0,1]^d))$ as $n\to\infty$.  We remark that
the sharp orders  of $a_n(I_d)$ depend only on the smoothness
parameter sequence $\bb$, and the sharp constants are closely
related to the volume of the generalized ellipsoid defined by
$\ab,\bb$ and $d$.   Our result generalizes  Theorem 2.4 in
\cite{CW}. However, we do not obtain results about preasymptotics
of $a_n(I_d)$ as in \cite{CW}.

We also consider  tractability of  the approximation problem
$I=\{I_d\}$ of the weighted anisotropic Sobolev embeddings.  We
consider algorithms that use finitely many continuous linear
functionals. The information complexity $n(\vz, I_d)$ is defined
to be the minimal number of linear functionals which are needed to
find an approximation to within an error threshold $\varepsilon$.
There are two kinds of tractability based on polynomial
convergence and exponential convergence. The classical
tractability describes how the information complexity
 behaves as a function of $d$ and $\vz^{-1}$, while the
exponential convergence-tractability (EC-tractability) does as one
of $d$ and $(1+\ln\vz^{-1})$. Nowadays the  study of tractability
and EC-tractability   has attracted much interest, and a great
number of interesting results
 have been obtained (see
\cite{NW1, NW2, NW3, DLPW, IKPW, PP, X2} and the references
therein).

Denote by $H(K_{d,\ab,2\bb})$ the analytic Korobov space which is
a reproducing kernel Hilbert space with the reproducing kernel
$K_{d,\ab,2\bb}$, and   whose definition will be given in Section
2.2.   Such  space $H(K_{d,\ab,2\bb})$ has been widely
investigated in the study of tractability and EC-tractability (see
\cite{DKPW, DLPW, IKPW, KPW0, KPW, LX1, Wangh}). Particularly, the
papers \cite{DKPW, Wangh} considered different notions of
EC-tractability of  the approximation problems ${\rm APP}=\{{\rm
APP}_d\}_{d\in\Bbb N}$, and obtained
 the corresponding necessary and sufficient conditions, where
\begin{equation}\label{1.2}{\rm APP}_d: \,H(K_{d,\ab,2\bb})\longrightarrow L_2([0,1]^d),\ \  {\rm with}\ {\rm APP}_d(f)=f.\end{equation}

In this paper, we establish the relationship of the information
complexities $n(\vz,I_d)$ and  $n(\vz,{\rm APP}_d)$. Based on this
relationship, we obtain
 necessary and sufficient conditions for various notions of tractability of the approximation problem $I=\{I_d\}_{d\in \Bbb N}$.

The paper is organized as follows. In Section 2 we introduce
 the weighted anisotropic Sobolev spaces,
 the analytic  Korobov spaces,   properties of the approximation numbers,  tractability, and then state out main results.
 Section 3 is devoted to proving
 strong equivalence  of the
approximation numbers of the weighted anisotropic embeddings. In
Section 4 we prove tractability of the weighted anisotropic
embeddings.

\section{Preliminaries and main results}

\subsection{ Weighted anisotropic Sobolev spaces on  $[0,1]^d$}

\

 Denote
by $ L_2([0,1]^d)$ the collection of  measurable functions $f$ on
$[0,1]^d$ with finite norm
$$\|f\|_2=\Big(\int_{[0,1]^d}|f(\xb)|^2\, d\xb\Big)^\frac{1}{2}<+ \infty.$$
It is well known that any $f\in L_2([0,1]^d)$ can be expressed by
its Fourier series $$f(\xb)=\sum\limits_{\kb\in \Bbb
Z^d}\hat{f}(\kb)e^{2\pi i\,\kb\xb},\ \ \ \xb\in[0,1]^d,$$ where
$i=\sqrt{-1}$, $\ \kb\xb=\sum_{j=1}^d {k_jx_j}$, and
$$\hat{f}(\kb)=\int_{[0,1]^d}f(\xb)e^{-2\pi
i\kb\xb}d\xb,\quad \kb\in\ \Bbb Z^d,$$are the Fourier coefficients
of the function $f$. We have the following Parseval equality:
$$\|f\|_2=\Big(\sum_{\kb\in \Bbb
Z^d}|\hat{f}(\kb)|^2\Big)^{1/2}.$$

For $r>0$, denote by $D_j^{r}f\equiv \frac{\partial^r}{\partial
x_j^r}f$ the $r$-order partial derivative of $f$ with respect to
$x_j$ in the sense of Weyl, i.e.,  $$D_j^rf(\xb)=\sum_{\kb\in\,
{\Bbb Z}^{d}}(
 2\pi ik_{j})^{r}\hat{f}({\bf k})e^{2\pi i\,{\bf k}{\bf x}},\ \
 (2\pi ik_{j})^{r}=|2\pi k_{j}|^{r}\exp\,(\frac{r \pi i}{2}{\rm sign}\,k_{j}).$$
It follows from the Parseval equality that for $D_j^{r}f\in
L_2([0,1]^d)$,
\begin{equation}\label{2.1}\|D_j^rf\|_2=\Big(\sum_{\kb\in \Bbb
Z^d}|2\pi k_j|^{2r} |\hat{f}(\kb)|^2\Big)^{1/2}.\end{equation}

Now we define  weighted anisotropic Sobolev spaces. Let ${\bf
a}=\big\{a_k\big\}_{k\ge1}$  and ${\bf b}=\big\{b_k\big\}_{k\ge1}$
be two sequences of positive  numbers.  Usually, we assume that
the sequences ${\bf a}=\big\{a_k\big\}_{k\ge1}$  and ${\bf
b}=\big\{b_k\big\}_{k\ge1}$ satisfy
\begin{equation}\label{2.2}0<a_1\le a_2 \le \dots\le a_k\le \dots ,\ \ \ {\rm and}\  \ \ b_*:=\inf_{k\ge 1}
b_k>0.\end{equation}

The weighted anisotropic Sobolev space $W_2^{\ab,\bb}([0,1]^d)$ is
defined by
\\ $$W_2^{\ab,\bb}([0,1]^d)=\Big\{f \in L_2([0,1]^d)\ :\ D_j^{{b_j}}f\in L_2([0,1]^d),\ \ j=1,2,\dots,d\Big\},$$ \\
with norm
\begin{align*}\big\|f\big\|_{W_2^{\ab,\bb}}&=\Big(\big\|f\big\|_2^2+\sum_{j=1}^d\frac{a_j}{(2\pi)^{2b_j}}\big\| D_j^{{b_j}}f\big\|_2^2\Big)^{1/2}. \end{align*}
Clearly,  $W_2^{\ab,\bb}([0,1]^d)$ is a Hilbert space. We remark
that  ${\bf b}$ is a smoothness parameter sequence, ${\bf a}$ is a
(regulated) scaling parameter sequence with respect to the
sequence  ${\bf b}$.

It follows from \eqref{2.1} that
\begin{equation}\label{2.3}\big\|f\big\|_{W_2^{\ab,\bb}}=\Big(
\sum\limits_{\kb\in \Bbb Z^d}\big(1+\sum_{j=1}^d a_j\vert
k_j\vert^{2b_j}\big)
\vert\hat{f}(\kb)\vert^2\Big)^{\frac{1}{2}}.\end{equation}

 If
$a_j=(2\pi)^{2b_j},\ j\in\Bbb N$, then $W_2^{\ab,\bb}([0,1]^d)$
recedes to the usual  anisotropic Sobolev spaces
$W_2^{\bb}([0,1]^d)$ on the torus $[0,1]^d$. We emphasize  that
the anisotropic Sobolev spaces given in \cite{CW} are defined on
the torus $\Bbb T^d=[0,2\pi]^d$, not on $[0,1]^d$.  It is easily
seen that
\begin{equation}\label{2.4}\big\|f\big\|_{W_2^{\bb}}=\Big(\big\|f\big\|_2^2+\sum_{j=1}^d\big\| D_j^{{b_j}}f\big\|_2^2\Big)^{1/2}=\Big(
\sum\limits_{\kb\in \Bbb Z^d}\big(1+\sum_{j=1}^d \vert 2\pi
k_j\vert^{2b_j}\big)
\vert\hat{f}(\kb)\vert^2\Big)^{\frac{1}{2}}.\end{equation}

We emphasize that if we denote $\tilde \bb=\{\tilde b_j\}$,
$\tilde b_j=(2\pi)^{2b_j},\ j\in\Bbb N$, then
$$W_2^{\bb}([0,1]^d)=W_2^{\tilde\bb,\bb}([0,1]^d).$$

\subsection{Analytic Korobov spaces}

\

Let $\ab=\{\ab_j\}_{j\geq1}$ and $\bb=\{\bb_j\}_{j\geq1}$ be the
sequences  satisfying  \eqref{2.2}. Fix $\oz\in (0,1)$. We define
the analytic Korobov kernel $K_{d,\ab,2\bb}$  by
$$K_{d,\ab,2\bb}(\xb,\yb)=\sum\limits_{\kb\in \Bbb Z^d}\oz_{\kb}e^{2\pi i\kb (\xb-\yb)}, \ \ {\rm for} \ \ {\rm all} \ \ \xb, \yb\in [0,1]^d,$$
where $$\oz_{\kb}=\oz^{\sum_{j=1}^d a_j\vert k_j\vert^{2b_j}}, \ \
{\rm for} \ \ {\rm all} \ \ \kb\in \Bbb Z^d.$$
 Denote by $H(K_{d,\ab,2\bb})$ the analytic  Korobov space which is a
reproducing kernel Hilbert space with the reproducing kernel
$K_{d,\ab,2\bb}$.   The inner product of  the space
$H(K_{d,\ab,2\bb})$ is given by
$$\langle f,g\rangle_{H(K_{d,\ab,2\bb})}=\sum\limits_{\kb\in\Bbb
Z^d}\hat{f}(\kb)\overline{\hat{g}(\kb)}\oz_{\kb}^{-1}, \ \ \ f,g
\in H(K_{d,\ab,2\bb}),$$ where $\hat{f}(\kb),\ \hat g(\kb),\
\kb\in \Bbb Z^d$ are the
 Fourier coefficients of the functions $f$ and $g$. The norm of  a function $f$ in $H(K_{d,\ab,2\bb})$
is given by
$$\Vert f\Vert_{H(K_{d,\ab,2\bb})}=(\sum\limits_{\kb\in \Bbb
Z^d}\vert\hat{f}(\kb)\vert^2\oz_{\kb}^{-1})^{\frac{1}{2}}<\infty.$$
Obviously, $\{e_{\kb}\}_{\kb\in \Bbb Z^d}$ is an orthonormal basis
for  $H(K_{d,\ab,2\bb})$ with $$e_{\kb}(\xb)=e^{2\pi
i\kb\xb}\oz_{\kb}^{\frac{1}{2}}.$$

\subsection{Approximation numbers}

\

 Let
${H}$ and ${G}$ be two  Hilbert spaces and $T$ be a compact linear
operator from  $H$ to $G$. The basic properties of approximation
numbers $a_n(T: H\to G)$ are well known, see e.g., Pietsch
\cite[Chapter 11]{Pi1} and \cite[Chapter 2]{Pi2}, K\"onig
\cite[Section 1.b]{K}, Pinkus \cite[Theorem IV.2.2]{P}, and Novak
and Wo\'zniakowski \cite[Corollary 4.12]{NW1}.

Let $H$ be a separable Hilbert space, $\{e_k\}_{k=1}^\infty $ an
orthonormal basis  in $H$, and ${\tau}=\{\tau_k\}_{k=1}^\infty$ a
sequence of positive numbers with $$\tau_1\ge\tau_2\ge \dots\ge
\tau_k\ge\cdots>0.$$ Let $H^{ \tau}$ be a Hilbert space defined by
$$H^\tau=\Big\{x\in H\ :\ \|x\|_{H^\tau}=\Big(\sum_{k=1}^\infty
\frac{|(x,e_k)|^2}{\tau_k^2}\Big)^{1/2}<\infty\Big\}.$$ According
to \cite[Corollary 2.6]{P} we have the following lemma.

\begin{lem}  Let $H, \tau $ and $H^{\tau }$ be defined as above.
Then $$a_n(I_d: H^\tau\to H)= \tau_n, \
 \ n\in\Bbb N.$$
\end{lem}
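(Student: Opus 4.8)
The plan is to identify the embedding $I_d : H^\tau \to H$ with a diagonal operator with respect to the two orthonormal systems $\{e_k\}$ (in $H$) and $\{\tau_k e_k\}$ (in $H^\tau$), and then to use the well-known characterization of approximation numbers of such diagonal operators in the Hilbert setting.

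First I would observe that $\{\tau_k e_k\}_{k=1}^\infty$ is an orthonormal basis of $H^\tau$: indeed $\|\tau_k e_k\|_{H^\tau}^2 = \tau_k^2/\tau_k^2 = 1$, and orthogonality is inherited from that of $\{e_k\}$ in $H$. Hence every $x\in H^\tau$ has the expansion $x = \sum_k (x,e_k) e_k = \sum_k c_k (\tau_k e_k)$ with $c_k = (x,e_k)/\tau_k$, and $\|x\|_{H^\tau}^2 = \sum_k |c_k|^2$. Under this identification, $I_d(\tau_k e_k) = \tau_k e_k$, so $I_d$ acts diagonally, sending the $k$-th basis vector of $H^\tau$ to $\tau_k$ times the $k$-th basis vector of $H$, with the singular values arranged in nonincreasing order by the hypothesis $\tau_1\ge\tau_2\ge\cdots$.

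Next I would invoke the standard fact (this is exactly \cite[Corollary 2.6]{P}, or the Hilbert-space case recorded in \cite[Corollary 4.12]{NW1}) that for a compact operator between Hilbert spaces the $n$-th approximation number equals the $n$-th singular value, together with the elementary computation of the singular values of a diagonal operator: $s_n(I_d) = \tau_n$. For the upper bound one takes $A$ to be the orthogonal projection onto $\mathrm{span}\{\tau_1 e_1,\dots,\tau_{n-1}e_{n-1}\}$ (followed by the identity into $H$), which has rank $n-1 < n$ and satisfies $\|I_d - A\|_{H^\tau\to H} = \sup_{k\ge n}\tau_k = \tau_n$; for the lower bound one restricts $I_d - A$, for an arbitrary $A$ of rank $<n$, to the $n$-dimensional subspace $V_n = \mathrm{span}\{\tau_1 e_1,\dots,\tau_n e_n\}$ of $H^\tau$, notes that $A|_{V_n}$ has a nontrivial kernel in $V_n$, and evaluates $\|I_d x\|_H = \|I_d - A\|\cdot\|x\|_{H^\tau}$ on a unit vector $x$ in that kernel to get $\|I_d - A\| \ge \tau_n$.

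Since the statement has essentially been set up to be an immediate consequence of \cite[Corollary 2.6]{P}, there is no serious obstacle here; the only point requiring a line of care is the verification that $\{\tau_k e_k\}$ is genuinely an orthonormal basis of $H^\tau$ and that the norm on $H^\tau$ is the $\ell_2$-norm of the coefficients in that basis, so that the abstract diagonal-operator result applies verbatim. I would therefore present the proof as: (i) the basis identification, (ii) citation of \cite[Corollary 2.6]{P} for the equality of approximation numbers with singular values of the resulting diagonal operator, yielding $a_n(I_d : H^\tau\to H) = \tau_n$.
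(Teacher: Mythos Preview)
Your proposal is correct and takes essentially the same approach as the paper: the paper does not give an independent proof but simply records the lemma as a direct consequence of \cite[Corollary 2.6]{P}, which is exactly the reference you invoke after the diagonalization step. Your added verification that $\{\tau_k e_k\}$ is an orthonormal basis of $H^\tau$ and the explicit upper/lower bound sketch are more than the paper provides; the only cosmetic slip is the line ``$\|I_d x\|_H = \|I_d - A\|\cdot\|x\|_{H^\tau}$'' in the lower-bound argument, which should read $\|(I_d-A)x\|_H = \|I_d x\|_H \ge \tau_n \|x\|_{H^\tau}$ for $x$ in the kernel of $A|_{V_n}$.
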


Let $\{W_{\ab,\bb,d}^*(l)\}_{l=1}^\infty$ be the non-increasing
rearrangement of$$\Big\{\Big(1+\sum_{j=1}^d a_j\vert
k_j\vert^{2b_j}\Big)^{-\frac{1}{2}}\Big\}_{\kb=(k_1,\dots,k_d)\in
\Bbb Z^d},$$ and let $\{\lz_{d,k}\}_{k=1}^\infty$ be  the
non-increasing rearrangement of $$\{\omega_{\kb}\}_{\kb\in \Bbb
Z^d}=\Big\{ \oz^{\sum_{j=1}^d a_j\vert
k_j\vert^{2b_j}}\Big\}_{\kb=(k_1,\dots,k_d)\in \Bbb Z^d},$$with
fixed $\oz\in (0,1)$. According to Lemma 2.1, we get
\begin{equation}\label{2.5}a_n(I_d:W_2^{\ab,\bb}([0,1]^d)\rightarrow L_2([0,1]^d))=W_{\ab,\bb,d}^*(n),\end{equation}
and
\begin{equation}\label{2.6}a_n({\rm APP}_d: H(K_{d,\ab,2\bb})\rightarrow
L_2([0,1]^d))=\lz_{d,n}^{1/2}.\end{equation}

\subsection{General notations of tractability}
\

Let $H_d$ and $G_d$ be two sequences of Hilbert space and for each
$d\in \Bbb N$, $F_d$ be the unit ball of $H_d$. Assume a sequence
of bounded linear operators (solution operators)
$$S_d:H_d\rightarrow G_d$$
for all $d\in \Bbb N$. For $n\in \Bbb N$ and $f\in F_d$, $S_d f$
can be approximated by algorithms $$A_{n,d}(f)=\phi
_{n,d}(L_1(f),L_2(f),\dots ,L_n(f)),$$ where $L_j,\ j=1,2,\dots
,n$ are continuous linear functionals on $F_d$ which are called
general information, and $\phi_{n,d}: \Bbb R^n\rightarrow G_d$ is
an arbitrary mapping. The worst case error $e(A_{n,d})$ of the
algorithm $A_{n,d}$ is defined as
$$e(A_{n,d})=\sup\limits_{f\in F_d}\Vert
S_d(f)-A_{n,d}(f)\Vert_{G_d}.$$ Furthermore, we defined the $n$th
minimal worst-case error as
$$e(n,S_d)=\inf\limits_{A_{n,d}}e(A_{n,d}),$$                  where
the infimum is taken over all algorithms using n information
operators $L_1,L_2,\dots ,L_n$. For $n=0$, we use $A_{0,d}=0$. The
error of $A_{0,d}$ is called the initial error and is given
by$$e(0,S_d)=e(A_{0,d})=\sup\limits_{f\in F_d}\Vert
S_d(f)\Vert_{G_d}.$$

From  \cite[p. 118]{NW1}, we know that the $n$th minimal
worst-case error $e(n,S_d)$ with respect to arbitrary algorithms
and general information in the Hilbert setting is just the
$(n+1)$-approximation number $a_{n+1}(S_d:H_d\rightarrow G_d)$,
i.e.,
$$e(n,S_d)=a_{n+1}(S_d:H_d\rightarrow G_d).$$

In this paper, we consider the embedding operators $S_d=I_d$ and
$S_d=APP_d$ which are defined by \eqref{1.1} and \eqref{1.2}.  We
note that
$$e(0,I_d)=\Vert I_d\Vert=1\ \ \ {\rm and}\ \ \ e(0,{\rm
APP}_d)=\Vert APP_d\Vert=1.$$ In both cases, $e(0,S_d)=1$. In
other words, the normalized error criterion and the absolute error
criterion coincide for the approximation problems $I=\{I_d\}$ and
${\rm APP}=\{{\rm APP}_d\}$.

For $\varepsilon \in (0,1)$ and $d\in\Bbb N$, let $n(\varepsilon
,S_d)$ be the information complexity defined by
\begin{equation}\label{2.6-1}n(\varepsilon ,S_d)=\min\{n\ :\
e(n,S_d)\leq\varepsilon\}, \end{equation}where
$$e(n,I_d)=a_{n+1}(I_d: \,W_2^{\bf a,b}([0,1]^d)\rightarrow L_2([0,1]^d)),$$ $$ e(n, {\rm APP}_d)= a_{n+1}({\rm APP}_d:
\,H(K_{d,\ab,2\bb})\rightarrow L_2([0,1]^d)).$$

 Now, we list the concepts of tractability below. We say
that the approximation problem $S=\{S_d\}_{d\in\Bbb N}$ is

$\bullet$   {\it strongly polynomially tractable (SPT)}   iff
there exist non-negative numbers $C$ and $p$ such that for all
$d\in \Bbb N,\ \va \in (0,1)$,
\begin{equation*}
n(\va ,S_d)\leq C(\va ^{-1})^p;
\end{equation*}

 $\bullet$  {\it polynomially tractable
(PT)} iff there exist non-negative numbers $C, p$ and $q$ such
that for all $d\in \Bbb N, \ \va \in(0,1)$,
\begin{equation*}
n(\va ,S_d)\leq Cd^q(\va ^{-1})^p;
\end{equation*}

$\bullet$   {\it quasi-polynomially tractable (QPT)} iff there
exist two constants $C,t>0$ such that for all $d\in \Bbb N, \ \va
\in(0,1)$,
\begin{equation*}
n(\va ,S_d)\leq C\exp[t(1+\ln\va ^{-1})(1+\ln d)];
\end{equation*}

$\bullet$ {\it uniformly weakly tractable (UWT)} iff for all
$\alpha,\beta>0$,
\begin{equation*}
\lim_{\varepsilon ^{-1}+d\rightarrow \infty }\frac{\ln n(\va
,S_d)}{(\va ^{-1})^{\alpha }+d^{\beta }}=0;
\end{equation*}

$\bullet$  {\it weakly tractable (WT)} iff
\begin{equation*}
\lim_{\va ^{-1}+d\rightarrow \infty }\frac{\ln n(\va ,S_d)}{\va
^{-1}+d}=0;
\end{equation*}

$\bullet$ {\it $(s,t)$-weakly tractable ($(s,t)$-WT)} for fixed
positive numbers $s$ and $t$ iff
\begin{equation*}
\lim_{\varepsilon ^{-1}+d\rightarrow \infty }\frac{\ln n(\va
,S_d)}{(\va ^{-1})^{s }+d^{t }}=0.
\end{equation*}

In the above definitions of SPT, PT, QPT, UWT, WT, and $(s,t)$-WT,
if we replace $\frac1{\vz}$ by $(1+\ln \frac 1{\vz})$, we get the
definitions of \emph{exponential convergence-strong polynomial
tractability (EC-SPT)}, \emph{exponential convergence-polynomial
tractability (EC-PT)}, \emph{exponential
convergence-quasi-polynomial tractability (EC-QPT)},
\emph{exponential convergence-uniform weak tractability
 (EC-UWT)}, \emph{exponential convergence-weak tractability
 (EC-WT)}, and \emph{exponential convergence-$(s,t)$-weak tractability
 (EC-$(s,t)$-WT)}, respectively. We now give the above notions of EC-tractability in
 detail.

 \vskip 2mm

 We say that $S=\{S_d\}_{d\in\Bbb N}$ is

 $\bullet$   {\it Exponential convergence-strong polynomial
tractable  (EC-SPT)}   iff there exist non-negative numbers $C$
and $p$ such that for all $d\in \Bbb N,\ \va \in (0,1)$,
\begin{equation*}
n(\va ,S_d)\leq C(1+\ln \va ^{-1})^p;
\end{equation*}

 $\bullet$  {\it Exponential convergence-polynomial
tractable  (EC-PT)} iff there exist non-negative numbers $C, p$
and $q$ such that for all $d\in \Bbb N, \ \va \in(0,1)$,
\begin{equation*}
n(\va ,S_d)\leq Cd^q(1+\ln \va ^{-1})^p;
\end{equation*}

$\bullet$   {\it Exponential convergence-quasi-polynomial
tractable (EC-QPT)} iff there exist two constants $C,t>0$ such
that for all $d\in \Bbb N, \ \va \in(0,1)$,
\begin{equation*}
n(\va ,S_d)\leq C\exp\{t[1+\ln(1+\ln \va ^{-1})](1+\ln d)\};
\end{equation*}

$\bullet$ {\it Exponential convergence-uniformly weakly tractable
(EC-UWT)} iff for all $\az, \beta>0$,
\begin{equation*}
\lim_{\varepsilon ^{-1}+d\rightarrow \infty }\frac{\ln n(\va
,S_d)}{(1+\ln \va ^{-1})^{\az }+d^{\beta }}=0;
\end{equation*}

$\bullet$  {\it Exponential convergence-weakly tractable (EC-WT)}
iff
\begin{equation*}
\lim_{\va ^{-1}+d\rightarrow \infty }\frac{\ln n(\va
,S_d)}{(1+\ln\va ^{-1})+d}=0;
\end{equation*}

$\bullet$ {\it Exponential convergence-$(s,t)$-weakly tractable
(EC-$(s,t)$-WT)} for fixed positive $s$ and $t$ iff
\begin{equation*}
\lim_{\varepsilon ^{-1}+d\rightarrow \infty }\frac{\ln n(\va
,S_d)}{(1+\ln \va ^{-1})^{s }+d^{t }}=0.
\end{equation*}

\subsection{Main results}

\

 Let ${\bf
a}=\big\{a_k\big\}_{k\ge1}$ and ${\bf b}=\big\{b_k\big\}_{k\ge1}$
be two sequences of positive  numbers.  For $t>0$ and $d\in\Bbb
N$, denote by
$$B_{\bf a,b}^d(t)=\{x\in \Bbb R^d: \sum_{j=1}^d\a_j\vert
x_j\vert^{b_j}\leq t\}$$ the generalized ellipsoid  in $\Bbb R^d$.
We write  $B_{\bf a,b}^d$ instead of $B_{\bf a,b}^d(1)$ for
brevity. Clearly, when $a_1=a_2=\cdots=a_d=1$, $B_{\bf a,b}^d$
recedes to the generalized unit ball $$B_{\bf b}^d=\Big\{x\in \Bbb
R^d: \sum_{j=1}^d\vert x_j\vert^{b_j}\leq 1\Big\}.$$ We shall show
that the volume of the generalized ellipsoid $B_{\bf a,b}^d$ is
\begin{align*}{\rm vol}(B_{\bf a,b}^d)&=2^d a_1^{-\frac{1}{b_1}}
a_2^{-\frac{1}{b_2}}\cdots a_d^{-\frac{1}{b_d}}
\frac{\Gz(1+\frac{1}{b_1})\Gz(1+\frac{1}{b_2})\cdots\Gz(1+\frac{1}{b_d})}{\Gz(1+\frac{1}{b_1}+\frac{1}{b_2}+\cdots+\frac{1}{b_d})}\\
&=a_1^{-\frac{1}{b_1}} a_2^{-\frac{1}{b_2}}\cdots
a_d^{-\frac{1}{b_d}}\,{\rm vol}(B_{\bf b}^d),\end{align*} where
$\Gz(x)=\int_0^\infty t^{x-1}e^{-t}dt$ is the Gamma function. The
volume of $B_\bb^d$ is known (see \cite{Wangx}).

 The authors in \cite{CW} investigated, among others, strong equivalence of
approximation numbers  $a_n(I_d:W_2^{\bf R}(\Bbb T^d)\rightarrow
L_2(\Bbb T^d))$, where $\Bbb T^d=[0,2\pi]^d$, ${\bf
R}=(R_1,R_2,\dots,R_d)\in \Bbb R_+^d$, and $I_d$ is the identity
(embedding) operator. They obtained (see \cite[Theorem 2.4]{CW})
\begin{equation}\label{2.7}\lim_{n\to\infty} n^{g({\bf R})}a_n(I_d:W_2^{\bf R}(\Bbb T^d)\rightarrow
L_2(\Bbb T^d))=({\rm vol}(B_{2{\bf R}}^d))^{g({\bf
R})},\end{equation} where $g({\bf
R})=\frac{1}{\frac{1}{R_1}+\frac{1}{R_2}+\cdots+\frac{1}{R_d}}.$

In this paper, we generalize the above result to the weighted
anisotropic spaces $W_2^{\ab,\bb}([0,1]^d)$. We use the volume
argument to obtain the asymptotic behavior of
$a_n(I_d:W_2^{\ab,\bb}([0,1]^d)\rightarrow L_2([0,1]^d)).$ Our
result can be formulated as follows.
\begin{thm}Let $\ab=\{a_j\}_{j\geq1}$ and $\bb=\{b_j\}_{j\geq1}$ be two sequences of positive numbers. Then we have
\begin{equation}\label{2.8}\lim_{n\to\infty} n^{{g_d(\bb)}}
a_n(I_d:W_2^{\ab,\bb}([0,1]^d)\rightarrow L_2([0,1]^d))=({\rm
vol}(B_{\ab,2\bb}^d))^{{g_d(\bb)}},\end{equation} where
$g_d(\bb)=\frac{1}{\frac{1}{b_1}+\frac{1}{b_2}+\cdots+\frac{1}{b_d}}$.
\end{thm}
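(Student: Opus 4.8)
The plan is to reduce the statement to a pure counting problem about lattice points inside dilated generalized ellipsoids, and then extract the sharp constant from the volume asymptotics of such ellipsoids. By \eqref{2.5} and Lemma~2.1, we have $a_n(I_d) = W_{\ab,\bb,d}^*(n)$, the $n$-th term of the non-increasing rearrangement of $\bigl\{(1+\sum_{j=1}^d a_j|k_j|^{2b_j})^{-1/2}\bigr\}_{\kb\in\ZZ^d}$. Equivalently, for $t>0$ let
\begin{equation*}
N(t) := \#\Bigl\{\kb\in\ZZ^d : 1+\sum_{j=1}^d a_j|k_j|^{2b_j}\le t\Bigr\} = \#\bigl(\ZZ^d\cap B_{\ab,2\bb}^d(t-1)\bigr),
\end{equation*}
so that $a_n(I_d)\le t^{-1/2}$ exactly when $N(t)\ge n$. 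Thus proving \eqref{2.8} amounts to showing that $N(t) \sim \mathrm{vol}(B_{\ab,2\bb}^d)\, t^{\,1/g_d(\bb)}$ as $t\to\infty$, because then inverting the relation between $n$ and the threshold $t$ (with the homogeneity $\mathrm{vol}(B_{\ab,2\bb}^d(s)) = s^{\sum 1/(2b_j)}\mathrm{vol}(B_{\ab,2\bb}^d)$ and $\sum_j 1/(2b_j) = 1/(2g_d(\bb))$) yields $a_n(I_d)^{-2} = t \sim (n/\mathrm{vol}(B_{\ab,2\bb}^d))^{2g_d(\bb)}$, which is \eqref{2.8} after taking the $g_d(\bb)$-th power of both sides.

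First I would establish the lattice-point count $N(t)\sim \mathrm{vol}(B_{\ab,2\bb}^d(t))$ as $t\to\infty$. The standard device is to sandwich the count between volumes of inflated and deflated bodies: each lattice point $\kb$ can be identified with the unit cube $\kb+[0,1)^d$, and since $B_{\ab,2\bb}^d(t)$ is a convex body (the exponents $2b_j$ may be less than $1$, so convexity can fail in general — I would instead argue via the monotone, star-shaped structure, comparing with $B_{\ab,2\bb}^d(t')$ for $t'$ slightly larger/smaller, using that the radial function scales continuously), one gets
\begin{equation*}
\mathrm{vol}\bigl(B_{\ab,2\bb}^d((1-\eta)t)\bigr) \le N(t) \le \mathrm{vol}\bigl(B_{\ab,2\bb}^d((1+\eta)t)\bigr)
\end{equation*}
for any fixed $\eta>0$ once $t$ is large, because the "boundary layer" of cubes meeting $\partial B_{\ab,2\bb}^d(t)$ is absorbed into a dilate by factor $1\pm\eta$. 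Dividing by $t^{1/(2g_d(\bb))}$ and using the homogeneity of the volume gives $(1-\eta)^{1/(2g_d(\bb))}\mathrm{vol}(B_{\ab,2\bb}^d) \le \liminf N(t)/t^{1/(2g_d(\bb))} \le \limsup \le (1+\eta)^{1/(2g_d(\bb))}\mathrm{vol}(B_{\ab,2\bb}^d)$, and letting $\eta\to 0$ pins down the limit. Separately, I would verify the closed formula for $\mathrm{vol}(B_{\ab,\bb}^d)$ claimed just before the theorem by the substitution $y_j = a_j x_j^{b_j}$ (on the positive orthant, then multiply by $2^d$) together with the Dirichlet integral $\int_{\Sigma} \prod y_j^{1/b_j - 1}\,dy = \prod\Gamma(1/b_j)/\Gamma(1+\sum 1/b_j)$ over the simplex $\{y\ge 0, \sum y_j\le 1\}$; this is a routine computation and it also records the scaling $\mathrm{vol}(B_{\ab,\bb}^d(t)) = t^{\sum 1/b_j}\mathrm{vol}(B_{\ab,\bb}^d)$ that I need above.

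Finally I would turn the asymptotic for $N(t)$ into the asymptotic for $a_n(I_d)$ cleanly. Set $\gamma := 1/(2g_d(\bb)) = \sum_{j=1}^d 1/(2b_j)$ and $V := \mathrm{vol}(B_{\ab,2\bb}^d)$, so $N(t) = (V+o(1))\, t^{\gamma}$. Given $n$, let $t_n$ be the value where the rearranged sequence has its $n$-th jump, i.e. $a_n(I_d) = t_n^{-1/2}$ with $N(t_n)\ge n > N(t_n^-)$; since consecutive values $1+\sum a_j|k_j|^{2b_j}$ are not equally spaced one argues $N(t_n) = n + o(n)$ (the multiplicity of any single level is $o(t^{\gamma})$, which follows from the same sandwiching applied to a thin shell). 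Then $n = (V+o(1)) t_n^{\gamma}$, hence $t_n = (n/V)^{1/\gamma}(1+o(1))$, and therefore $a_n(I_d)^2 = t_n^{-1} = (V/n)^{1/\gamma}(1+o(1))$, i.e. $n^{1/\gamma} a_n(I_d)^2 \to V^{1/\gamma}$; raising to the power $\gamma = g_d(\bb)$ gives $\lim_n n^{g_d(\bb)} a_n(I_d) = V^{g_d(\bb)} = (\mathrm{vol}(B_{\ab,2\bb}^d))^{g_d(\bb)}$, as desired. The main obstacle I anticipate is the boundary-layer estimate: controlling $N(t)/t^\gamma$ uniformly requires care precisely because $B_{\ab,2\bb}^d$ need not be convex when some $b_j<1/2$, so the inflation/deflation comparison must be phrased via the radial (Minkowski-type gauge) function of the star body rather than via convex-geometry inequalities; once that is set up, handling the single-level multiplicity (to get $N(t_n) = n+o(n)$) is a corollary of the same shell estimate.
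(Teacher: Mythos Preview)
Your approach is essentially the paper's: reduce $a_n(I_d)$ to the non-increasing rearrangement, count lattice points in $B_{\ab,2\bb}^d(t)$ by sandwiching the cube-union between volumes of perturbed bodies, then invert using the homogeneity of the volume. The one substantive difference is how the boundary layer is controlled. You correctly flag that $B_{\ab,2\bb}^d$ need not be convex when some $b_j<\tfrac12$ and propose a star-body/gauge argument; the paper instead proves a concrete quasi-triangle inequality (Lemma~3.3): with $p_d=\max\{1,2b_1,\dots,2b_d\}$,
\[
\Bigl(\sum_{j=1}^d a_j|x_j+y_j|^{2b_j}\Bigr)^{1/p_d}\le \Bigl(\sum_{j=1}^d a_j|x_j|^{2b_j}\Bigr)^{1/p_d}+\Bigl(\sum_{j=1}^d a_j|y_j|^{2b_j}\Bigr)^{1/p_d},
\]
which immediately yields the inclusions $\bigcup_{\kb}Q_{\kb}\subset B_{\ab,2\bb}^d\bigl((m+C_{\ab,\bb,d})^{p_d}\bigr)$ and $B_{\ab,2\bb}^d\bigl((m-C_{\ab,\bb,d})_+^{p_d}\bigr)\subset \bigcup_{\kb}Q_{\kb}$. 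This is cleaner than a gauge argument, especially since the bodies $B_{\ab,2\bb}^d(t)$ are \emph{not} dilates of one another when the $b_j$ differ, so ``the radial function scales continuously'' needs to be replaced by a genuine estimate; the coordinate-wise bound $(|k_j|+\tfrac12)^{2b_j}\le (1+\eta)|k_j|^{2b_j}+C$ would do, but Lemma~3.3 packages this once and for all. Two minor points: in your first display $t^{1/g_d(\bb)}$ should read $t^{1/(2g_d(\bb))}$ (you silently fix this when introducing $\gamma$), and the shell/multiplicity estimate at the end is unnecessary---from $N(t)\sim V t^{\gamma}$ and $N(t_n-\delta)<n\le N(t_n)$ one gets $(V-\epsilon)t_n^{\gamma}\le n\le (V+\epsilon)t_n^{\gamma}$ directly by letting $\delta\to 0$, so $t_n\sim (n/V)^{1/\gamma}$ without any control on level multiplicities.
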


In particular, we have the following corollary.

\begin{cor}Let  $\bb=\{\bb_j\}_{j\geq1}$ be a sequences of positive numbers. Then
\begin{equation}\label{2.9}\lim_{n\to\infty} n^{{g_d(\bb)}}
a_n(I_d:W_2^{\bb}([0,1]^d)\rightarrow
L_2([0,1]^d))=\big((2\pi)^{-2d}{\rm
vol}(B_{2\bb}^d)\big)^{{g_d(\bb)}}.\end{equation}
\end{cor}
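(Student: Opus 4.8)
The plan is to derive this corollary from Theorem 2.1 by specializing the parameter sequence. Recall from the discussion following \eqref{2.4} that $W_2^{\bb}([0,1]^d)=W_2^{\tilde\bb,\bb}([0,1]^d)$, where $\tilde\bb=\{\tilde b_j\}_{j\ge1}$ with $\tilde b_j=(2\pi)^{2b_j}$. Thus the embedding $I_d:W_2^{\bb}([0,1]^d)\to L_2([0,1]^d)$ coincides with $I_d:W_2^{\tilde\bb,\bb}([0,1]^d)\to L_2([0,1]^d)$, so we may apply Theorem 2.1 with $\ab$ replaced by $\tilde\bb$. This immediately gives
\begin{equation*}
\lim_{n\to\infty} n^{g_d(\bb)} a_n(I_d:W_2^{\bb}([0,1]^d)\to L_2([0,1]^d))=\big({\rm vol}(B_{\tilde\bb,2\bb}^d)\big)^{g_d(\bb)},
\end{equation*}
since $g_d(\bb)$ depends only on $\bb$, not on the scaling sequence.

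The remaining task is to compute ${\rm vol}(B_{\tilde\bb,2\bb}^d)$ and match it with the claimed expression $(2\pi)^{-2d}{\rm vol}(B_{2\bb}^d)$. Using the volume formula for generalized ellipsoids stated in the excerpt (with smoothness exponents $2b_j$ and scaling coefficients $\tilde b_j=(2\pi)^{2b_j}$),
\begin{equation*}
{\rm vol}(B_{\tilde\bb,2\bb}^d)=\prod_{j=1}^d \tilde b_j^{-\frac{1}{2b_j}}\cdot {\rm vol}(B_{2\bb}^d)=\prod_{j=1}^d \big((2\pi)^{2b_j}\big)^{-\frac{1}{2b_j}}\cdot {\rm vol}(B_{2\bb}^d).
\end{equation*}
Each factor simplifies: $\big((2\pi)^{2b_j}\big)^{-\frac{1}{2b_j}}=(2\pi)^{-1}$, so the product over $j=1,\dots,d$ equals $(2\pi)^{-d}$. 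This does not yet match the stated $(2\pi)^{-2d}$, which signals that one must be careful about the relationship between the exponent convention in the space ($b_j$, since $\|D_j^{b_j}f\|_2$) and the exponent $2b_j$ appearing in the norm \eqref{2.4} and in the ellipsoid $B_{\ab,2\bb}^d$ of Theorem 2.1. Tracing through \eqref{2.3} and \eqref{2.4}, the weight in the Korobov/Sobolev norm for $W_2^{\bb}$ is $1+\sum_j |2\pi k_j|^{2b_j}=1+\sum_j (2\pi)^{2b_j}|k_j|^{2b_j}$, so indeed the effective scaling coefficient is $(2\pi)^{2b_j}$ against the exponent $2b_j$; the discrepancy should resolve once one accounts for the factor $(2\pi)^{-2d}$ that is absorbed when converting between the $[0,1]^d$ normalization and the lattice $\ZZ^d$ — I will recheck whether the intended reading is $\tilde b_j=(2\pi)^{b_j}$ versus a compensating change, or whether ${\rm vol}(B_{2\bb}^d)$ in \eqref{2.9} is to be understood with a shifted convention.

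The main obstacle is therefore purely bookkeeping: ensuring that the substitution $\ab\leftarrow\tilde\bb$ is done consistently with the exponent doubling convention used in Theorem 2.1 (where the ellipsoid is $B_{\ab,2\bb}^d$, i.e., coefficients $a_j$ paired with exponents $2b_j$), so that the product of scaling factors $\prod_j \tilde b_j^{-1/(2b_j)}$ comes out to exactly $(2\pi)^{-2d}$ as claimed in \eqref{2.9}. Once the convention is pinned down, no further analysis is needed: the corollary follows by direct substitution into \eqref{2.8} together with the elementary volume identity. I would present the proof as: (i) identify $W_2^{\bb}=W_2^{\tilde\bb,\bb}$; (ii) invoke Theorem 2.1; (iii) evaluate the volume ratio explicitly using the displayed formula for ${\rm vol}(B_{\ab,\bb}^d)$.
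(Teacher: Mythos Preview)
Your approach is exactly the paper's: the corollary is stated without a separate proof, as an immediate specialization of Theorem~2.2 via the identification $W_2^{\bb}([0,1]^d)=W_2^{\tilde\bb,\bb}([0,1]^d)$ with $\tilde b_j=(2\pi)^{2b_j}$.

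Your volume computation is also correct, and the discrepancy you flag is a typo in the stated corollary, not a gap in your argument. With $a_j=(2\pi)^{2b_j}$ paired with exponents $2b_j$, Lemma~3.1 gives
\[
{\rm vol}(B_{\tilde\bb,2\bb}^d)=\prod_{j=1}^d\big((2\pi)^{2b_j}\big)^{-1/(2b_j)}\,{\rm vol}(B_{2\bb}^d)=(2\pi)^{-d}\,{\rm vol}(B_{2\bb}^d),
\]
not $(2\pi)^{-2d}$. A sanity check at $d=1$, $b_1=1$ confirms this: the approximation numbers are, after rearrangement, $(1+4\pi^2 k^2)^{-1/2}$, so $n\,a_n\to 1/\pi$, and indeed $(2\pi)^{-1}\,{\rm vol}(B_{2\bb}^1)=(2\pi)^{-1}\cdot 2=1/\pi$, whereas $(2\pi)^{-2}\cdot 2=1/(2\pi^2)$ does not match. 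The same slip appears in Remark~2.5, where the exponent should be $d$ rather than $2d$ (and that corrected form is still consistent with \eqref{2.7} at $b-a=2\pi$). So abandon the search for hidden normalizations or lattice factors and present the proof exactly as in your steps (i)--(iii), with the constant $(2\pi)^{-d}$.
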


\begin{rem}Let $\ab=\{a_j\}_{j\ge1}$ and ${\bf b}=\{b_j\}_{j\ge1}$ be two sequences of positive numbers,  and    $g_d({\bf b})=\frac{1}{1/b_1+\cdots+1/b_d}$.
Theorem 2.2 indicates that the exact decay rate in $n$ of the
approximation numbers $ a_n(I_d:W_2^{\ab,\bb}([0,1]^d)\rightarrow
L_2([0,1]^d))$ is $n^{-g_d(\bb)}$ which is independent of $\ab$,
and the sharp constant is $({\rm
vol}(B_{\ab,2\bb}^d))^{{g_d(\bb)}}$. We can rephrase \eqref{2.8}
 as  strong equivalences
$$a_n\big(I_d :W_2^{\bf a,b}([0,1]^d)\rightarrow L_2 ([0,1]^d)\big)\sim   n^{-g_d({\bf b})}(\mathrm{vol}(B_{\ab,2{\bf
b}}^d))^{g_d({\bf b})}.
$$ The novelty of Theorem 2.2  is that they give strong equivalences and provide
asymptotically optimal (sharp) constants, for arbitrary fixed $d$,
${\bf a}$, and $\bb$.
\end{rem}

 \begin{rem}Comparing \eqref{2.7} with \eqref{2.9}, we get that the sharp
 constants of the approximation numbers of anisotropic Sobolev embeddings on the torus $[a,b]^d$ depend on the volume of the torus. We can show that
\begin{equation*}\lim_{n\to\infty} n^{{g_d(\bb)}}
a_n(I_d:W_2^{\bb}([a,b]^d)\rightarrow
L_2([a,b]^d))=\Big(\Big(\frac{b-a}{2\pi}\Big)^{2d}{\rm
vol}(B_{2\bb}^d)\Big)^{{g_d(\bb)}}.\end{equation*}
  \end{rem}

\begin{rem}According to \cite[Theorem 3.1]{CKS}, we know that the condition $g_d(\bb)>\frac{1}{2}$ is a sufficient
and necessary condition for the embedding from
$W_2^{\ab,\bb}([0,1]^d)$ into $L_\infty([0,1]^d)$  or
$C([0,1]^d)$. Furthermore, for $g_d(\bb)>\frac{1}{2}$, using the
proof technique of  \cite[Theorem 4.3]{CKS} we can show
\begin{equation*}\lim_{n\to\infty}n^{{g_d(\bb)}-\frac{1}{2}}a_n(I_d:W_2^{\ab,\bb}([0,1]^d)
\rightarrow L_\infty([0,1]^d))=(2g_d(\bb)-1)^{-\frac{1}{2}}({\rm
vol}(B_{\ab,2\bb}^d))^{{g_d(\bb)}}.\end{equation*} Note that the
above equality also  holds  if we replace $L_\infty([0,1]^d)$ by
$C([0,1]^d)$.
\end{rem}

Next, we establish a relationship of the information complexities
$n(\vz,I_d)$ and $n(\vz, {\rm APP}_d)$. Such relationship is
crucial for obtaining sufficient and necessary conditions of
various notions of tractability of the approximation problem
$I=\{I_d\}$.

\begin{thm}For  $\varepsilon \in(0,1), \ \ d\in\Bbb N,$  we have  \begin{equation}\label{2.11}n(\varepsilon,{\rm APP}_d)=
n\Big(\Big(\frac{\ln \varepsilon^{-2}}{\ln
\oz^{-1}}+1\Big)^{-\frac{1}{2}},I_d\Big)\end{equation} and
\begin{equation}\label{2.12}n(\varepsilon,I_d)=n(\oz^{\frac{\varepsilon^{-2}-1}{2}},{\rm
APP}_d),\end{equation}where $n(\vz, S_d)$ is given in
\eqref{2.6-1}, $S_d=I_d$ or ${\rm APP}_d$.
\end{thm}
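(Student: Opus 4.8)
The plan is to exploit the two explicit singular-value formulas \eqref{2.5} and \eqref{2.6} together with the definition \eqref{2.6-1} of the information complexity. Recall that $a_n(I_d) = W_{\ab,\bb,d}^*(n)$, the $n$-th largest value of $(1+\sum_j a_j|k_j|^{2b_j})^{-1/2}$ over $\kb\in\ZZ^d$, while $a_n(\mathrm{APP}_d) = \lz_{d,n}^{1/2}$, where $\lz_{d,n}$ is the $n$-th largest value of $\oz^{\sum_j a_j|k_j|^{2b_j}}$. The crucial observation is that both rearrangements are governed by the \emph{same} ordering of the quantities $s(\kb):=\sum_{j=1}^d a_j|k_j|^{2b_j}\ge 0$: the map $s\mapsto (1+s)^{-1/2}$ and the map $s\mapsto \oz^{s}$ (with $\oz\in(0,1)$) are both strictly decreasing in $s$, so they sort $\ZZ^d$ in exactly the same (non-increasing) order. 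Hence if $\{s_k\}_{k\ge 1}$ denotes the non-decreasing rearrangement of $\{s(\kb)\}_{\kb\in\ZZ^d}$, then $W_{\ab,\bb,d}^*(n) = (1+s_n)^{-1/2}$ and $\lz_{d,n} = \oz^{s_n}$ simultaneously, for every $n$.

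First I would record this common-ordering fact and deduce the pointwise identity $\lz_{d,n} = \oz^{s_n}$ and $W_{\ab,\bb,d}^*(n)^2 = (1+s_n)^{-1}$, i.e. $s_n = W_{\ab,\bb,d}^*(n)^{-2}-1$, so that
\begin{equation*}
\lz_{d,n}^{1/2} = \oz^{(W_{\ab,\bb,d}^*(n)^{-2}-1)/2},\qquad
W_{\ab,\bb,d}^*(n) = \Bigl(\frac{\ln \lz_{d,n}^{-1}}{\ln\oz^{-1}}+1\Bigr)^{-1/2}.
\end{equation*}
Next I would translate these into statements about the error sequences $e(n,S_d)=a_{n+1}(S_d)$ and then about $n(\vz,S_d)$. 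For \eqref{2.11}: $n(\vz,\mathrm{APP}_d)$ is the least $n$ with $\lz_{d,n+1}^{1/2}\le\vz$, equivalently (taking logs, using $\oz<1$) with $s_{n+1}\ge \ln\vz^{-2}/\ln\oz^{-1}$, equivalently with $(1+s_{n+1})^{-1/2}\le (\ln\vz^{-2}/\ln\oz^{-1}+1)^{-1/2}$, i.e. with $a_{n+1}(I_d)\le (\ln\vz^{-2}/\ln\oz^{-1}+1)^{-1/2}$; that last condition is exactly the defining inequality for $n\bigl((\ln\vz^{-2}/\ln\oz^{-1}+1)^{-1/2},I_d\bigr)$, giving \eqref{2.11}. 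For \eqref{2.12}: $n(\vz,I_d)$ is the least $n$ with $a_{n+1}(I_d)\le\vz$, i.e. $(1+s_{n+1})^{-1/2}\le\vz$, i.e. $s_{n+1}\ge \vz^{-2}-1$, i.e. $\oz^{s_{n+1}}\le\oz^{\vz^{-2}-1}$ (again $\oz<1$ reverses the inequality), i.e. $\lz_{d,n+1}^{1/2}\le\oz^{(\vz^{-2}-1)/2}$, which is the defining inequality for $n(\oz^{(\vz^{-2}-1)/2},\mathrm{APP}_d)$, giving \eqref{2.12}.

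The only genuine subtlety — and the step I would write out most carefully — is the claim that the two rearrangements use the same order. This needs the monotonicity of $s\mapsto(1+s)^{-1/2}$ and $s\mapsto\oz^s$ together with a remark that ties can be broken by any fixed rule consistently for both sequences (since equal values of $s(\kb)$ produce equal values of both $(1+s(\kb))^{-1/2}$ and $\oz^{s(\kb)}$, any tie-breaking is automatically compatible). One should also note that the set $\{s(\kb):\kb\in\ZZ^d\}$ is discrete and tends to $+\infty$, so the non-decreasing rearrangement $\{s_n\}$ is well defined; this follows from \eqref{2.2}, since $a_j\ge a_1>0$ forces $s(\kb)\ge a_1\sum_j|k_j|^{2b_*}\to\infty$ as $|\kb|\to\infty$. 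Beyond that, everything is a direct unwinding of definitions, and I expect no further obstacle. A small bookkeeping point worth double-checking is the index shift $e(n,S_d)=a_{n+1}(S_d)$ in \eqref{2.6-1}, but it enters both sides of \eqref{2.11} and \eqref{2.12} identically and causes no trouble.
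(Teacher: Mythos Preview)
Your proposal is correct and follows essentially the same approach as the paper: both proofs reduce the two information complexities to conditions on the common quantity $s(\kb)=\sum_{j=1}^d a_j|k_j|^{2b_j}$ via the strict monotonicity of $s\mapsto(1+s)^{-1/2}$ and $s\mapsto\oz^s$. The only cosmetic difference is that the paper works directly with cardinalities, writing $n(\varepsilon_1,\mathrm{APP}_d)=\#\{\kb:\ s(\kb)<\ln\varepsilon_1^{-2}/\ln\oz^{-1}\}$ and matching this set against the analogous description of $n(\varepsilon_2,I_d)$, which sidesteps the tie-breaking discussion you (correctly) included; your route through the common rearrangement $\{s_n\}$ reaches the same conclusion by the same mechanism.
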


We know that the classical tractability of the multivariate
problem APP has been  solved completely in \cite{KPW, LX1, IKPW}.
For the EC-tractability of  APP,   the sufficient and necessary
conditions for  EC-SPT, EC-PT, EC-QPT, EC-UWT, EC-WT, and
EC-$(s,t)$-WT with $\max(s,t)>1$
   were given in
  \cite{IKPW}, and for EC-$(s,t)$-WT with $\max(s,t)\le 1$ and $\min(s,t)<1$ in \cite{Wangh}. Based on Theorem 2.7 we obtain the following tractability results
of the   approximation problem $I=\{I_d\}$.

\begin{thm}Consider the   approximation problem
$I=\{I_d\}$  in the worst case setting with the sequences ${\bf
a}$ and ${\bf b}$
 satisfying \eqref{2.2}. Then

$(\romannumeral 1)$  SPT holds iff PT holds iff
$$\sum_{j=1}^\infty b_j^{-1}<\infty  \qquad {\rm and}\qquad\underset{j\to\infty}{\underline{\lim}}\frac{\ln a_j}{
j}>0;$$

$(\romannumeral 2)$  QPT holds iff $$\sup_{d\in \Bbb
N}\frac{\sum_{j=1}^d b_j^{-1}}{1+\ln d}<\infty\qquad {\rm
and}\qquad
 \underset{j\to\infty}{\underline{\lim}}\frac{(1+\ln j)\ln a_j}{
j}>0;$$

$(\romannumeral 3)$ UWT holds iff $$\lim_{j\to\infty}\frac{\ln
a_j}{\ln j}=\infty;$$

$(\romannumeral 4)$ WT  holds iff  $$\lim\limits_{j\to
\infty}\frac {a_j}j=\infty;$$

$(\romannumeral 5)$ $(s,t)$-WT with $\max(s/2,t)>1$ always holds;

\vskip 1mm

$(\romannumeral 6)$ $(2,1)$-WT  holds iff
\begin{equation*}\lim\limits_{j\to\infty}a_j=\infty;\end{equation*}

$(\romannumeral 7)$ $(2,t)$-WT with $t<1$ holds iff
\begin{equation*}\lim\limits_{j\to\infty}\frac{ a_j}{\ln j}=\infty;\end{equation*}

$(\romannumeral 8)$  $(s,t)$-WT with $s<2$ and $t\leq 1$  holds
iff
\begin{equation*}\lim\limits_{j\to\infty}\frac{ a_j}{j^{(2-s)/s}}=\infty.\end{equation*}
\end{thm}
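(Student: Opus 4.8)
The plan is to transfer everything to the already-solved problem $\mathrm{APP}=\{\mathrm{APP}_d\}$ via the information-complexity identities of Theorem 2.7, and then read off the conditions on $\ab,\bb$ from the known characterizations for $\mathrm{APP}$ in \cite{IKPW,Wangh,KPW,LX1}. The key observation is that \eqref{2.11} says $n(\vz,\mathrm{APP}_d)=n(g(\vz),I_d)$ where $g(\vz)=\big(\tfrac{\ln\vz^{-2}}{\ln\oz^{-1}}+1\big)^{-1/2}$, and \eqref{2.12} gives the inverse relation; note that as $\vz\to 0^+$ one has $\ln g(\vz)^{-1}\sim \tfrac12\ln\ln\vz^{-1}$, and conversely $\oz^{(\vz^{-2}-1)/2}$ has $1+\ln\big(\oz^{(\vz^{-2}-1)/2}\big)^{-1}\sim \tfrac{\ln\oz^{-1}}{2}\vz^{-2}$. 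Thus polynomial-in-$\vz^{-1}$ behavior for $I_d$ corresponds to polynomial-in-$(1+\ln\vz^{-1})$ behavior for $\mathrm{APP}_d$, i.e. $I=\{I_d\}$ is SPT/PT/QPT/UWT/WT/$(s,t)$-WT exactly when $\mathrm{APP}=\{\mathrm{APP}_d\}$ is EC-SPT/EC-PT/EC-QPT/EC-UWT/EC-WT/EC-$(s',t)$-WT for the appropriate transformed exponent.

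First I would record the precise correspondence for each notion. For SPT, PT, QPT this is immediate from \eqref{2.12}: $n(\vz,I_d)=n(\oz^{(\vz^{-2}-1)/2},\mathrm{APP}_d)$, and substituting this into the defining inequalities turns $(1+\ln(\cdot)^{-1})^p$ into a constant times $\vz^{-2p}$, $d^q$ stays $d^q$, and the QPT exponential with $[1+\ln(1+\ln(\cdot)^{-1})]$ becomes (up to the additive constant absorbed in $C$) one with $(1+\ln\vz^{-1})$; hence $I$ is SPT $\iff$ APP is EC-SPT, PT $\iff$ EC-PT, QPT $\iff$ EC-QPT. For the weak-type notions I would use \eqref{2.11} together with monotonicity of $n(\cdot,S_d)$ in $\vz$: the limit conditions defining UWT, WT, $(s,t)$-WT for $I$ are equivalent, after the substitution $\vz\mapsto g(\vz)$ and the asymptotics $\ln g(\vz)^{-1}\sim\tfrac12\ln\ln\vz^{-1}$, to the corresponding EC-limit conditions for APP; in particular $(s,t)$-WT for $I$ matches EC-$(s/2,t)$-WT for APP (the factor $1/2$ coming from $\vz^{-2}$ inside $g$), which is why part (v) requires $\max(s/2,t)>1$, part (vi) is $(2,1)$-WT $\leftrightarrow$ EC-$(1,1)$-WT, part (vii) is $(2,t)$-WT $\leftrightarrow$ EC-$(1,t)$-WT, and part (viii) is $(s,t)$-WT $\leftrightarrow$ EC-$(s/2,t)$-WT with $s/2<1$. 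UWT for $I$ corresponds to EC-UWT for APP, and WT for $I$ to EC-WT for APP.

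Then I would simply quote the known necessary-and-sufficient conditions for the EC-tractability notions of $\mathrm{APP}=\{\mathrm{APP}_d\}$ in terms of $\ab$ (and $\bb$ through $\sum b_j^{-1}$): from \cite{IKPW} one gets EC-SPT $\iff$ EC-PT $\iff$ $\sum_j b_j^{-1}<\infty$ and $\varliminf_j \tfrac{\ln a_j}{j}>0$; EC-QPT $\iff$ $\sup_d \tfrac{\sum_{j\le d} b_j^{-1}}{1+\ln d}<\infty$ and $\varliminf_j\tfrac{(1+\ln j)\ln a_j}{j}>0$; EC-UWT $\iff$ $\lim_j\tfrac{\ln a_j}{\ln j}=\infty$; EC-WT $\iff$ $\lim_j\tfrac{a_j}{j}=\infty$; EC-$(s,t)$-WT with $\max(s,t)>1$ always holds; and from \cite{IKPW,Wangh} the remaining $(s,t)$ cases: EC-$(1,1)$-WT $\iff$ $\lim_j a_j=\infty$, EC-$(1,t)$-WT with $t<1$ $\iff$ $\lim_j\tfrac{a_j}{\ln j}=\infty$, and EC-$(s,t)$-WT with $s<1$, $t\le 1$ $\iff$ $\lim_j\tfrac{a_j}{j^{(1-s)/s}}=\infty$. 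Rewriting the last one with $s$ replaced by $s/2$ gives exponent $\tfrac{1-s/2}{s/2}=\tfrac{2-s}{s}$, matching part (viii).

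The main obstacle is making the passage between the two problems fully rigorous rather than merely asymptotic: the identities in Theorem 2.7 are exact, so I must verify that the transformed thresholds stay inside $(0,1)$ (they do, since $g(\vz)\in(0,1)$ and $\oz^{(\vz^{-2}-1)/2}\in(0,1)$ for $\vz\in(0,1)$, $\oz\in(0,1)$), and I must be careful with the additive constants and the exact vs.\ asymptotic form of the logarithmic substitutions — e.g. showing that $\ln n(\vz,I_d)/\big((\vz^{-1})^s+d^t\big)\to 0$ is genuinely equivalent to $\ln n(\eta,\mathrm{APP}_d)/\big((1+\ln\eta^{-1})^{s/2}+d^t\big)\to 0$ along the substitution, using monotonicity of $n$ and the two-sided bounds $c\,(1+\ln\eta^{-1})\le \vz^{-2}\le C\,(1+\ln\eta^{-1})$ that follow from the explicit form of $g$. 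Once this dictionary is established the theorem is a direct translation, so I would organize the proof as: (1) state and prove the six equivalences "$I$ is $X$ $\iff$ APP is EC-$X'$" using Theorem 2.7; (2) invoke the cited characterizations of EC-tractability for APP to conclude.
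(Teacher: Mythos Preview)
Your approach is essentially the paper's: it proves the dictionary (their Theorem 4.1) showing that $I$ is SPT/PT/QPT/UWT and $(2s,t)$-WT iff $\mathrm{APP}$ is EC-SPT/EC-PT/EC-QPT/EC-UWT and EC-$(s,t)$-WT, exactly via the identities \eqref{2.11}--\eqref{2.12} and the two-sided bounds you mention, then reads off the conditions on $\ab,\bb$ from \cite{IKPW,Wangh}.

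One slip to fix: you write ``WT for $I$ to EC-WT for APP'' and then quote ``EC-WT $\iff \lim_j a_j/j=\infty$''. Both statements are wrong, though they cancel. By your own $(s,t)\mapsto(s/2,t)$ rule, WT $=(1,1)$-WT for $I$ corresponds to EC-$(\tfrac12,1)$-WT for $\mathrm{APP}$, not EC-WT; and the literature gives EC-WT $\iff \lim_j a_j=\infty$ (which is what you correctly use for part~(vi)), while EC-$(\tfrac12,1)$-WT $\iff \lim_j a_j/j^{(1-1/2)/(1/2)}=\lim_j a_j/j=\infty$, which is the condition in part~(iv). So route (iv) through EC-$(\tfrac12,1)$-WT, not EC-WT.
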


In particular, let $\bb=\{b_j\}$ satisfy
\begin{equation}\label{2.13}0<b_1\le b_2\le \dots\le b_j\le
\dots,\end{equation} and $\tilde \bb=\{\tilde b_j\}$, $\tilde
b_j=(2\pi)^{2b_j},\ j\in\Bbb N$. Then
$W_2^{\bb}([0,1]^d)=W_2^{\tilde\bb,\bb}([0,1]^d).$ By Theorem 2.8
we have the following result.

\begin{thm}Consider the   approximation problem
$\tilde I=\{\tilde I_d\}_{d\in\Bbb N}$  in the worst case setting
with the sequence ${\bf b}$
 satisfying \eqref{2.13}, where $$\tilde I_d: W_2^{\bb}([0,1]^d)\to L_2([0,1]^d)\ \ \ {\rm with} \ \ \ \tilde I_d(f)=f.$$
 Then we have
\vskip 2mm

 $(\romannumeral 1)$ $\tilde I$ is  SPT  iff $\tilde I$
is PT  iff  $$\sum\limits_{j=1}^\infty b_j^{-1}<\infty ;$$

 $(\romannumeral 2)$  $\tilde I$ is QPT  iff  $$\sup\limits_{d\in \Bbb
N}\frac{\sum_{j=1}^d b_j^{-1}}{1+\ln d}<\infty;$$

\vskip 2mm

$(\romannumeral 3)$ UWT holds iff
$$\lim_{j\to\infty}\frac{b_j}{\ln j}=\infty;$$

$(\romannumeral 4)$ WT  holds iff  $$\lim\limits_{j\to
\infty}\frac {(2\pi)^{2b_j}}j=\infty;$$

$(\romannumeral 5)$ $(s,t)$-WT with $\max(s/2,t)>1$ always holds;

\vskip 1mm

$(\romannumeral 6)$ $(2,1)$-WT  holds iff
\begin{equation*}\lim\limits_{j\to\infty}b_j=\infty;\end{equation*}

$(\romannumeral 7)$ $(2,t)$-WT with $t<1$ holds iff
\begin{equation*}\lim\limits_{j\to\infty}\frac{(2\pi)^{2b_j}}{\ln j}=\infty;\end{equation*}

$(\romannumeral 8)$  $(s,t)$-WT with $s<2$ and $t\leq 1$  holds
iff
\begin{equation*}\lim\limits_{j\to\infty}\frac{(2\pi)^{2b_j}}{j^{(2-s)/s}}=\infty.\end{equation*}
\end{thm}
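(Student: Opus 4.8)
The plan is to derive Theorem 2.10 as a direct specialization of Theorem 2.8 to the scaling sequence $\tilde\bb=\{\tilde b_j\}$ with $\tilde b_j=(2\pi)^{2b_j}$, using the identity $W_2^{\bb}([0,1]^d)=W_2^{\tilde\bb,\bb}([0,1]^d)$ recorded just before the statement. Thus for each notion of tractability I would take the condition on the pair $(\ab,\bb)$ given in Theorem 2.8, substitute $a_j\mapsto\tilde b_j=(2\pi)^{2b_j}$, and simplify. The simplification rests on the elementary fact that $\ln\tilde b_j=2b_j\ln(2\pi)$, so any expression involving $\ln a_j$ becomes a constant multiple of $b_j$, while expressions involving $a_j$ itself become $(2\pi)^{2b_j}$; the conditions involving only $\bb$ (the sums $\sum b_j^{-1}$) are of course unchanged. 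One should note that Theorem 2.8 is stated under hypothesis \eqref{2.2}, namely $0<a_1\le a_2\le\cdots$ and $b_*=\inf_j b_j>0$; under \eqref{2.13} the sequence $\bb$ is nondecreasing and positive, hence $b_*=b_1>0$, and $\tilde b_j=(2\pi)^{2b_j}$ is nondecreasing, so \eqref{2.2} holds and Theorem 2.8 applies.

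I would then go item by item. For (i): by Theorem 2.8(i), SPT $\Leftrightarrow$ PT $\Leftrightarrow$ $\sum_j b_j^{-1}<\infty$ and $\underline{\lim}_j (\ln a_j)/j>0$; with $a_j=(2\pi)^{2b_j}$ the second condition reads $\underline{\lim}_j (2\ln(2\pi)\,b_j)/j>0$, i.e. $\underline{\lim}_j b_j/j>0$. The key observation here is that $\sum_j b_j^{-1}<\infty$ already forces $b_j\to\infty$ faster than linearly in the Cesàro sense — more precisely, $\sum b_j^{-1}<\infty$ implies $\underline{\lim}_j b_j/j=\infty>0$ by a standard argument (if $b_j\le cj$ along a subsequence then $\sum b_j^{-1}$ would diverge; one makes this rigorous using monotonicity of $b_j$). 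Hence the second condition is automatically implied by the first and can be dropped, leaving just $\sum_j b_j^{-1}<\infty$. For (ii): similarly, $\sup_d(\sum_{j\le d}b_j^{-1})/(1+\ln d)<\infty$ combined with monotonicity of $b_j$ gives $b_j/\ln j\to\infty$, which in turn yields $\underline{\lim}_j (1+\ln j)\ln a_j/j=\underline{\lim}_j 2\ln(2\pi)(1+\ln j)b_j/j>0$, so the second condition in Theorem 2.8(ii) is again redundant and only the first survives. For (iii): $\lim_j (\ln a_j)/\ln j=\infty$ becomes $\lim_j 2\ln(2\pi)b_j/\ln j=\infty$, i.e. $\lim_j b_j/\ln j=\infty$. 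Items (iv), (vi), (vii), (viii) involve $a_j$ directly, so they translate verbatim with $a_j$ replaced by $(2\pi)^{2b_j}$; item (v) involves neither $\ab$ nor $\bb$ and is copied unchanged.

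The main obstacle — really the only non-bookkeeping point — is justifying that the two "extra" conditions on $\ab$ in parts (i) and (ii) of Theorem 2.8 become redundant once $a_j=(2\pi)^{2b_j}$. The clean way is the following lemma, which I would state and prove (or cite if available): if $\{b_j\}$ is nondecreasing, positive, and $\sum_j b_j^{-1}<\infty$, then $b_j/j\to\infty$; and if instead $\sup_d(\sum_{j\le d}b_j^{-1})/(1+\ln d)<\infty$, then $b_j/\ln j\to\infty$. For the first, note $\sum_{k=1}^{2j} b_k^{-1}\ge \sum_{k=j+1}^{2j}b_k^{-1}\ge j b_{2j}^{-1}$ by monotonicity, and since the tail $\sum_{k>j}b_k^{-1}\to 0$ we get $j/b_{2j}\to 0$, i.e. $b_{2j}/j\to\infty$; monotonicity then fills the gaps. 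For the second, $\sum_{k=1}^{2j}b_k^{-1}\ge j b_{2j}^{-1}$ and the left side is $O(\ln j)$, so $b_{2j}/\ln j\gtrsim j/\ln j\to\infty$, again extended by monotonicity. Once this lemma is in hand the theorem follows immediately by substitution, and I would present the proof as: "Since $W_2^{\bb}([0,1]^d)=W_2^{\tilde\bb,\bb}([0,1]^d)$ with $\tilde b_j=(2\pi)^{2b_j}$, Theorem 2.8 applies with $\ab=\tilde\bb$. Substituting and using the lemma above to discard the redundant conditions in (i) and (ii) yields (i)--(viii)." I do not expect any genuine difficulty here; the content is entirely in Theorems 2.8 and the elementary lemma on nondecreasing sequences with summable reciprocals.
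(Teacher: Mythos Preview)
Your plan matches the paper's proof exactly: apply Theorem 2.8 with $a_j=(2\pi)^{2b_j}$, then argue that the second conditions in (i) and (ii) are implied by the first. The paper's argument for this redundancy is cleaner than your doubling argument, though: since $b_j$ is nondecreasing, $\sum_{j=1}^k b_j^{-1}\ge kb_k^{-1}$, so $B:=\sum_j b_j^{-1}<\infty$ gives $b_k/k\ge 1/B$ immediately, and likewise $D:=\sup_d(\sum_{j\le d}b_j^{-1})/(1+\ln d)<\infty$ gives $(1+\ln k)b_k/k\ge 1/D$ immediately---no tails or doubling needed.

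One caution on your sketch for (ii): the implication ``$b_j/\ln j\to\infty$ yields $\underline{\lim}_j(1+\ln j)b_j/j>0$'' is false in general (take $b_j=(\ln j)^2$), and the line ``$b_{2j}/\ln j\gtrsim j/\ln j$'' should read $b_{2j}\gtrsim j/\ln j$. Your computation actually delivers the latter, which \emph{is} the liminf condition directly, so drop the intermediate claim about $b_j/\ln j$ and conclude straight from $b_{2j}\gtrsim j/\ln j$ (or, better, use the one-line argument above).
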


\section{Strong equivalences of  approximation numbers  }

 This section is devoted to studying strong equivalence of
the approximation numbers
$a_n(I_d:W_2^{\ab,\bb}([0,1]^d)\rightarrow L_2([0,1]^d))$, where
$\ab=\{a_j\}_{j\ge1}$ and $\bb=\{b_j\}_{j\ge1}$ are two sequences
of positive numbers. In this section we do not need to assume that
$\ab,\ \bb$ satisfy \eqref{2.2}.

We know from \cite{Wangx}  that the volume of the generalized unit
ball $B_\bb^d$ is
$${\rm vol}(B_\bb^d) ={\rm vol}\{x\in \Bbb R^d: \sum_{j=1}^d\vert
x_j\vert^{b_j}\leq
1\}=2^d\frac{\Gz(1+\frac{1}{b_1})\Gz(1+\frac{1}{b_2})\cdots\Gz(1+\frac{1}{b_d})}{\Gz(1+\frac{1}{b_1}+\frac{1}{b_2}+\cdots+\frac{1}{b_d})}.$$

\begin{lem}Let $\ab=\{a_j\}_{j\geq1}$ and $\bb=\{b_j\}_{j\geq1}$ be two sequences of positive numbers. Then
\begin{equation}\label{3.1}{\rm vol}(B_{\bf a,b}^d)=a_1^{-\frac{1}{b_1}}a_2^{-\frac{1}{b_2}}\cdots a_d^{-\frac{1}{b_d}}\, {\rm vol}(B_\bb^d).\end{equation}
\end{lem}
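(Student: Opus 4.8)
The plan is to compute $\mathrm{vol}(B_{\ab,\bb}^d)$ directly and then recognize the scaling factor. First I would write the volume as an integral,
\[
\mathrm{vol}(B_{\ab,\bb}^d)=\int_{\{x\in\RR^d:\ \sum_{j=1}^d a_j|x_j|^{b_j}\le 1\}}dx,
\]
and perform the change of variables $y_j=a_j^{1/b_j}x_j$, so that $a_j|x_j|^{b_j}=|y_j|^{b_j}$ and $dx_j=a_j^{-1/b_j}\,dy_j$. This maps the ellipsoid $B_{\ab,\bb}^d$ onto the generalized unit ball $B_\bb^d=\{y\in\RR^d:\sum_{j=1}^d|y_j|^{b_j}\le 1\}$, and the Jacobian factor $\prod_{j=1}^d a_j^{-1/b_j}$ comes straight out of the integral, giving \eqref{3.1} at once.

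There is essentially no obstacle here; the only point requiring a word of care is that the change of variables is a diagonal linear map with strictly positive entries (this uses $a_j>0$, which is part of the hypothesis), so it is a bijection of $\RR^d$ onto itself with constant Jacobian determinant $\prod_{j=1}^d a_j^{-1/b_j}$, and the integral transforms cleanly. If one prefers to avoid even mentioning the measure-theoretic change of variables, one can instead invoke the known closed form for $\mathrm{vol}(B_\bb^d)$ recalled just above the lemma (from \cite{Wangx}) together with the analogous closed form for $\mathrm{vol}(B_{\ab,\bb}^d)$: inserting $a_j$ into each $|x_j|^{b_j}$ merely rescales each axis, so the Gamma-function expression is identical except for the prefactor $\prod_{j=1}^d a_j^{-1/b_j}$. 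Either route is a two-line argument.

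So in summary: state the integral, substitute $y_j=a_j^{1/b_j}x_j$, pull out the Jacobian $\prod_j a_j^{-1/b_j}$, and identify the remaining integral as $\mathrm{vol}(B_\bb^d)$. The second displayed equality in the statement of Theorem 2.2's preamble (the explicit Gamma-function formula for $\mathrm{vol}(B_{\ab,\bb}^d)$) then follows by combining \eqref{3.1} with the cited formula for $\mathrm{vol}(B_\bb^d)$. I do not anticipate any genuine difficulty; the lemma is purely a normalization bookkeeping step that will be used repeatedly in Section 3 to convert volume estimates for $B_{\ab,2\bb}^d$ into the sharp constant appearing in \eqref{2.8}.
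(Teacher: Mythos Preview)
Your proposal is correct and is essentially identical to the paper's own proof: the paper also performs the diagonal linear substitution $y_j=a_j^{1/b_j}x_j$, records the constant Jacobian $\prod_{j=1}^d a_j^{-1/b_j}$, and identifies the resulting integral as $\mathrm{vol}(B_\bb^d)$. There is nothing to add.
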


\begin{proof}We make a change of variables $$y_1=x_1a_1^{\frac{1}{b_1}}, \ \  y_2=x_2a_2^{\frac{1}{b_2}},\quad \cdots,\quad y_d=x_da_d^{\frac{1}{b_d}}$$
 that deforms $B_{\bf a,b}^d$ into $B_\bb^d$. The Jacobian determinant is $J(y)={a_1^{-\frac{1}{b_1}}a_2^{-\frac{1}{b_2}}\cdots a_d^{-\frac{1}{b_d}}}$.
 By the change of variables formula, we get $${\rm vol}(B_{\bf a,b}^d)
 =\int_{B_{\bf a,b}^d}1dx=\int_{B_\bb^d}J(y)dy={a_1^{-\frac{1}{b_1}}a_2^{-\frac{1}{b_2}}\cdots a_d^{-\frac{1}{b_d}}}\,{\rm vol}(B_\bb^d).$$ The proof of Lemma 3.1 is finished.
\end{proof}

\begin{lem}Let $\ab=\{a_j\}_{j\geq1}$ and $\bb=\{b_j\}_{j\geq1}$ be two sequences of positive numbers. Then
\begin{equation}\label{3.2}{\rm vol}(B_{\bf a,b}^d(t))=t^{\frac{1}{b_1}+\frac{1}{b_2}+
\cdots+\frac{1}{b_d}}\,{\rm vol}(B_{\bf
a,b}^d)=t^{\frac1{g_d(\bb)}}\,{\rm vol}(B_{\bf
a,b}^d),\end{equation}where
$g_d(\bb)=\frac{1}{\frac{1}{b_1}+\frac{1}{b_2}+\cdots+\frac{1}{b_d}}$.
\end{lem}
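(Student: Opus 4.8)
The plan is to establish \eqref{3.2} by a simple scaling argument, analogous to the proof of Lemma 3.1 but now exploiting the homogeneity of the defining function of the ellipsoid. First I would fix $t>0$ and consider the change of variables $x_j = t^{1/b_j} y_j$ for $j = 1, \dots, d$, which maps the set $B_{\ab,\bb}^d$ bijectively onto $B_{\ab,\bb}^d(t)$: indeed, if $\sum_{j=1}^d a_j |y_j|^{b_j} \le 1$, then $\sum_{j=1}^d a_j |x_j|^{b_j} = \sum_{j=1}^d a_j t |y_j|^{b_j} = t \sum_{j=1}^d a_j |y_j|^{b_j} \le t$, and conversely. The Jacobian of this map is the constant $\prod_{j=1}^d t^{1/b_j} = t^{\sum_{j=1}^d 1/b_j}$, so the change-of-variables formula gives
\[
{\rm vol}(B_{\ab,\bb}^d(t)) = \int_{B_{\ab,\bb}^d(t)} 1\, dx = \int_{B_{\ab,\bb}^d} t^{\sum_{j=1}^d 1/b_j}\, dy = t^{\sum_{j=1}^d 1/b_j}\, {\rm vol}(B_{\ab,\bb}^d).
\]

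Next I would simply observe that $\sum_{j=1}^d 1/b_j = 1/g_d(\bb)$ by the definition $g_d(\bb) = \big(\sum_{j=1}^d 1/b_j\big)^{-1}$, which rewrites the exponent in the claimed form and completes the proof. No serious obstacle arises here: the only point to check carefully is that the substitution is measure-theoretically legitimate (the map is a smooth diffeomorphism of $\RR^d$ onto itself for $t > 0$, with positive constant Jacobian, so the change-of-variables formula applies to the indicator of the bounded measurable set $B_{\ab,\bb}^d$), and that the map indeed sends $B_{\ab,\bb}^d$ onto $B_{\ab,\bb}^d(t)$, which is the one-line computation above.

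I expect the main ``obstacle'' — really just a matter of care rather than difficulty — to be keeping the direction of the scaling straight: one wants $x_j = t^{1/b_j} y_j$ (not $t^{-1/b_j}$), so that the level set $\{\sum a_j|x|^{b_j} \le t\}$ pulls back to the unit-level set $\{\sum a_j|y|^{b_j} \le 1\}$, and correspondingly the Jacobian factor $t^{\sum 1/b_j}$ appears with a positive rather than negative exponent. An alternative, essentially equivalent route would be to write ${\rm vol}(B_{\ab,\bb}^d(t)) = {\rm vol}(B_{\tilde\ab,\bb}^d)$ with $\tilde a_j = a_j/t$ and invoke Lemma 3.1 together with the already-established formula ${\rm vol}(B_{\ab,\bb}^d) = \prod_j a_j^{-1/b_j}\,{\rm vol}(B_\bb^d)$; this yields the factor $\prod_j (a_j/t)^{-1/b_j}\big/\prod_j a_j^{-1/b_j} = t^{\sum_j 1/b_j}$ and hence the same conclusion. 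I would present the direct change-of-variables version as it is shortest and self-contained.
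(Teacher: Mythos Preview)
Your proof is correct and is essentially identical to the paper's own argument: the paper also uses the change of variables $y_j = x_j t^{-1/b_j}$ (equivalently, your $x_j = t^{1/b_j} y_j$) and reads off the Jacobian $t^{\sum_j 1/b_j}$ to obtain \eqref{3.2}.
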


\begin{proof}We make a change of variables $$y_1=x_1t^{-\frac{1}{b_1}}, \ y_2=x_2t^{-\frac{1}{b_2}},\ \dots,\ y_d=x_dt^{-\frac{1}{b_d}}$$
that deforms $B_{\bf a,b}^d(t)$ into $B_{\bf a,b}^d$. The Jacobian
determinant is
$J(y)=t^{{\frac{1}{b_1}}+{\frac{1}{b_2}}+...+{\frac{1}{b_d}}}$.
 By the change of variables formula, we obtain $${\rm vol}(B_{\bf a,b}^d(t))=\int_{B_{\bf a,b}^d(t)}1dx=\int_{B_{\bf a,b}^d}J(y)dy=
 t^{\frac{1}{b_1}+\frac{1}{b_2}+\cdots+\frac{1}{b_d}}\,{\rm vol}(B_{\bf a,b}^d).$$ The proof of Lemma 3.2 is finished.
\end{proof}

\begin{lem}Let $\ab=\{a_j\}_{j\geq1}$ and $\bb=\{b_j\}_{j\geq1}$ be two sequences of positive numbers, and let $p_d=\max\{1,2b_1,2b_2,\dots,2b_d\}$.
Then for any $\xb,\yb\in \Bbb R^d$, we have
\begin{equation}\label{3.3}\Big(\sum_{j=1}^d a_j\vert x_j+y_j\vert^{2b_j}\Big)^\frac{1}{p_d}\leq\Big(\sum_{j=1}^d a_j\vert x_j\vert^{2b_j}\Big)^\frac{1}{p_d}+
\Big(\sum_{j=1}^d a_j\vert
y_j\vert^{2b_j}\Big)^\frac{1}{p_d}.\end{equation}
\end{lem}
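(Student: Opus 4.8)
The plan is to prove \eqref{3.3} by reducing it to a known triangle inequality for $\ell_p$-type functionals. First I would set $p:=p_d=\max\{1,2b_1,\dots,2b_d\}$ and note that $p\ge 1$ and $p\ge 2b_j$ for every $j=1,\dots,d$, so each exponent $r_j:=2b_j/p$ satisfies $0<r_j\le 1$. For a single nonnegative term, subadditivity of $s\mapsto s^{r_j}$ on $[0,\infty)$ (valid precisely because $0<r_j\le 1$) gives $|x_j+y_j|^{2b_j}\le(|x_j|+|y_j|)^{2b_j}=\bigl((|x_j|+|y_j|)^p\bigr)^{r_j}$; one then wants to compare $(|x_j|+|y_j|)^p$ with $|x_j|^p+|y_j|^p$, but in the wrong direction, so this naive term-by-term route does not immediately close. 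Instead I would work directly with the map $T\colon\RR^d\to\RR^d$ defined by $T(\xb)=\bigl(a_1^{1/(2b_1)}|x_1|,\dots,a_d^{1/(2b_d)}|x_d|\bigr)$, so that $\sum_j a_j|x_j|^{2b_j}=\sum_j (T(\xb))_j^{2b_j}$, reducing to the case $a_j\equiv1$.

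The core step is the following inequality: for nonnegative reals $u_j,v_j$ and exponents $2b_j>0$ with $p:=\max\{1,2b_1,\dots,2b_d\}$,
\begin{equation*}
\Bigl(\sum_{j=1}^d (u_j+v_j)^{2b_j}\Bigr)^{1/p}\le\Bigl(\sum_{j=1}^d u_j^{2b_j}\Bigr)^{1/p}+\Bigl(\sum_{j=1}^d v_j^{2b_j}\Bigr)^{1/p}.
\end{equation*}
To prove this I would first observe that the function $\Phi(\xb)=\bigl(\sum_j |x_j|^{2b_j}\bigr)^{1/p}$ is a quasi-norm, and more usefully that it is actually subadditive: since $p\ge 2b_j$, Minkowski's inequality in $\ell_{p/(2b_j)}$-spirit — or more simply the inequality $\bigl(\sum_j c_j\bigr)^{1/p}\le \sum_j c_j^{1/p}$ combined with concavity — is available. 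Concretely, for each fixed $j$ write $2b_j=p\cdot r_j$ with $r_j\in(0,1]$; then by the elementary inequality $(\alpha+\beta)^{r_j}\le \alpha^{r_j}+\beta^{r_j}$ applied with $\alpha=u_j^p$, $\beta=v_j^p$ (after first noting $(u_j+v_j)^{2b_j}=((u_j+v_j)^p)^{r_j}\le (u_j^p+v_j^p)^{r_j}$ — wait, $(u_j+v_j)^p\ge u_j^p+v_j^p$ when $p\ge1$, so this direction fails). The clean fix is to invoke the known fact that $\xb\mapsto\bigl(\sum_j|x_j|^{q_j}\bigr)^{1/p}$ is subadditive whenever $q_j\ge p$ for all $j$ and $p\ge1$; this is a standard mixed-norm statement (it follows from the triangle inequality in $L_p$ of the sequence space after applying $t\mapsto t^{1/p}$ monotonicity together with $\ell_{q_j}\hookrightarrow\ell_p$ contractions). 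I would cite or reprove it: for $p\ge1$, $\|\cdot\|_p$ on $\RR^d$ is a norm, and for $q\ge p$ one has $|s+t|^q\le(|s|^p+|t|^p)^{q/p}$ is \emph{false} in general too; so the honest argument is: $\bigl(\sum_j(u_j+v_j)^{2b_j}\bigr)^{1/p}\le\bigl(\sum_j\bigl((u_j^{2b_j})^{1/p}+(v_j^{2b_j})^{1/p}\bigr)^{p}\bigr)^{1/p}$, using $u_j+v_j\le (u_j^{2b_j/p}+v_j^{2b_j/p})^{p/(2b_j)}$ — i.e. $(u_j+v_j)^{2b_j/p}\le u_j^{2b_j/p}+v_j^{2b_j/p}$, which holds since $2b_j/p\le1$ — and then the ordinary Minkowski inequality $\|(\alpha_j+\beta_j)_j\|_p\le\|(\alpha_j)_j\|_p+\|(\beta_j)_j\|_p$ with $\alpha_j=u_j^{2b_j/p}$, $\beta_j=v_j^{2b_j/p}$.

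Putting it together: from $2b_j/p\le 1$ we get $(u_j+v_j)^{2b_j/p}\le u_j^{2b_j/p}+v_j^{2b_j/p}$ for each $j$; raising to the power $p$ (monotone since both sides nonnegative) and summing over $j$, then taking the $p$-th root, Minkowski's inequality for the $\ell_p^d$ norm applied to the vectors $(u_j^{2b_j/p})_{j=1}^d$ and $(v_j^{2b_j/p})_{j=1}^d$ yields the displayed core inequality. Finally, substituting $u_j=a_j^{1/(2b_j)}|x_j|$ and $v_j=a_j^{1/(2b_j)}|y_j|$, and using $|x_j+y_j|\le|x_j|+|y_j|$ together with monotonicity of $s\mapsto s^{2b_j}$, gives $\sum_j a_j|x_j+y_j|^{2b_j}\le\sum_j(u_j+v_j)^{2b_j}$, and the chain of inequalities produces exactly \eqref{3.3}. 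The main obstacle is purely bookkeeping: getting the direction of the elementary scalar inequalities right — one must use $(u+v)^{r}\le u^r+v^r$ for $r\le1$ at the per-coordinate level and the full Minkowski inequality for $\ell_p$ with $p\ge1$ at the aggregation level, and not conflate the two; once the two regimes are separated, the proof is routine.
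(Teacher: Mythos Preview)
Your final argument is correct and follows essentially the same route as the paper: reduce to the unweighted case $a_j\equiv 1$ by the substitution $x_j\mapsto a_j^{1/(2b_j)}x_j$, $y_j\mapsto a_j^{1/(2b_j)}y_j$. The paper simply cites \cite[Lemma 3.2]{CW} for the unweighted inequality and performs this substitution directly (without passing through absolute values, so no separate appeal to $|x_j+y_j|\le|x_j|+|y_j|$ is needed). You additionally supply a self-contained proof of that cited lemma---coordinatewise subadditivity $(u_j+v_j)^{2b_j/p}\le u_j^{2b_j/p}+v_j^{2b_j/p}$ from $2b_j/p\le 1$, followed by the ordinary $\ell_p$-Minkowski inequality applied to the vectors $(u_j^{2b_j/p})_j$ and $(v_j^{2b_j/p})_j$---which is indeed the standard way to prove it. So your write-up is more detailed but not a different approach; the several abandoned attempts in the middle of the proposal should of course be excised in a clean version, since the ``Putting it together'' paragraph already contains the complete and correct argument.
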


\begin{proof}It follows from \cite[Lemma 3.2]{CW} that for
$\xb=(x_1,\dots,x_d), \ \yb=(y_1,\dots,y_d),\ \xb,\yb\in \Bbb
R^d$, we have
$$
\Big(\sum_{j=1}^d|x_j+y_j|^{2b_j}\Big)^{1/p_d}\le
\Big(\sum_{j=1}^d|x_j|^{2b_j}\Big)^{1/p_d}+\Big(\sum_{j=1}^d|y_j|^{2b_j}\Big)^{1/p_d}.
$$
If we replace $x_j,\ y_j, \ j=1,2,\dots,d$ by $a_j^{\frac
1{2b_j}}x_j,\ a_j^{\frac 1{2b_j}}y_j,\  j=1,2,\dots,d$ in the
above inequality, then we get
$$
\Big(\sum_{j=1}^d|a_j^{\frac 1{2b_j}}x_j+a_j^{\frac
1{2b_j}}y_j|^{2b_j}\Big)^{1/p_d}\le \Big(\sum_{j=1}^d|a_j^{\frac
1{2b_j}}x_j|^{2b_j}\Big)^{1/p_d}+\Big(\sum_{j=1}^d|a_j^{\frac
1{2b_j}}y_j|^{2b_j}\Big)^{1/p_d},
$$which gives \eqref{3.3}.
Lemma 3.3 is proved.
\end{proof}

\

\noindent {\it Proof of Theorem 2.2.}

\vskip 1mm

It follows from \eqref{2.5} that
$$a_n(I_d:W_2^{\ab,\bb}([0,1]^d)\rightarrow
L_2([0,1]^d))=W_{\ab,\bb,d}^*(n),$$where
$\{W_{\ab,\bb,d}^*(l)\}_{l=1}^\infty$ is the non-increasing
rearrangement of$$\Big\{\Big(1+\sum_{j=1}^d a_j\vert
k_j\vert^{2b_j}\Big)^{-\frac{1}{2}}\Big\}_{\kb=(k_1,\dots,k_d)\in
\Bbb Z^d}.$$

For $m\in \Bbb N$, let $C(m,\ab,\bb,d)$ denote the cardinality of
the set $$\Big\{\kb \ :\ \sum_{j=1}^d a_j\vert k_j\vert^{2b_j}\leq
m^{p_d},\ \kb\in \Bbb Z^d\Big\}.$$ where $p_d=\max\{1,2b_1,
2b_2,\dots,2b_d\}$. It follows that  for $n>C(m,\ab,\bb,d)$,
$$
a_n(I_d:W_2^{\ab,\bb}([0,1]^d)\rightarrow L_2([0,1]^d))\leq
(1+m^{p_d})^{-\frac{1}{2}}.$$ and for $n\leq C(m,\ab,\bb,d)$,
$$
a_n(I_d:W_2^{\ab,\bb}([0,1]^d)\rightarrow L_2([0,1]^d))\geq
(1+m^{p_d})^{-\frac{1}{2}}.$$

 For any $m\in \Bbb N$, let $Q_k$ be a cube with center $\kb$, sides parallel to the axes and side-length $1$.
For$$\xb\in\bigcup\limits_{\substack{{\bf k}\in\Bbb
Z^d\\\sum_{j=1}^{d}a_j\vert k_j\vert^{2b_j}\leq m^{p_d}}}Q_{\bf
k},$$ there exists a $\kb\in\Bbb Z^d$ such that $\sum_{j=1}^d
a_j\vert k_j\vert^{2b_j}\leq m^{p_d}$ and $\xb\in Q_\kb$. It
follows from  the definition of $Q_\kb$ that $$\vert
x_j-k_j\vert\leq\frac{1}{2},\ \ j=1,2,\dots,d.$$ By \eqref{3.3} we
have
\begin{align*}\Big(\sum_{j=1}^d a_j\vert x_j\vert^{2b_j}\Big)^\frac{1}{p_d}&\leq\Big(\sum_{j=1}^d a_j\vert x_j-k_j\vert^{2b_j}\Big)^\frac{1}{p_d}+
\Big(\sum_{j=1}^d a_j\vert
k_j\vert^{2b_j}\Big)^\frac{1}{p_d}\\&\leq\Big(\sum_{j=1}^d
a_j2^{-2b_j}\Big)^\frac{1}{p_d}+m.
\end{align*}
It follows that
\begin{equation}\label{3.4}\bigcup\limits_{\substack{{\bf
k}\in\Bbb Z^d\\\sum_{j=1}^{d}a_j\vert k_j\vert^{2b_j}\leq
m^{p_d}}}Q_{\bf k}\ \subset\ B_{\ab,2\bb}^d
\Big(\Big(m+\big(\sum_{j=1}^d
a_j2^{-2b_j}\big)^\frac{1}{p_d}\Big)^{p_d}\Big).\end{equation}
Using the same technique, we get
\begin{equation}\label{3.5}B_{\ab,2\bb}^d\Big(\Big(m-\big(\sum_{j=1}^d a_j2^{-2b_j}\big)^\frac{1}{p_d}\Big)^{p_d}_+\Big)
\ \subset\ \bigcup\limits_{\substack{{\bf k}\in\Bbb
Z^d\\\sum_{j=1}^{d}a_j\vert k_j\vert^{2b_j}\leq m^{p_d}}}Q_{\bf k}
,\end{equation} where $a_+$ is equal to $a$ if $a>0$ and 0 if
$a<0$. We note that the volume of the set
$${\rm vol}\,\Bigg(\ \bigcup\limits_{\substack{{\bf k}\in\Bbb
Z^d\\\sum_{j=1}^{d}a_j\vert k_j\vert^{2b_j}\leq m^{p_d}}}Q_{\bf
k}\Bigg)=\#\Big\{\kb \ :\ \sum_{j=1}^d a_j\vert
k_j\vert^{2b_j}\leq m^{p_d},\ \kb\in \Bbb Z^d\Big\}$$ is just
$C(m,\ab,\bb,d)$, where $\# A$ denotes the number of elements in a
set $A$. We set
$$C_{\ab,\bb,d}=\Big(\sum_{j=1}^d a_j2^{-2b_j}\Big)^\frac{1}{p_d}.$$ By \eqref{3.5}, \eqref{3.4}, and \eqref{3.2} we get
\begin{equation}\label{3.6}(m-C_{\ab,\bb,d})_+^{\frac{p_d}{g_d(2\bb)}}{\rm vol}(B_{\ab,2\bb}^d)
\leq C(m,\ab,\bb,d)\leq(m+C_{\ab,2\bb,d})^{\frac{p_d}{g_d(2\bb)}},
\end{equation}where $g_d(2\bb)=2g_d(\bb)=\frac{2}{\frac{1}{b_1}+\frac{1}{b_2}+\cdots+\frac{1}{b_d}}$. We write
$$a_n(I_d)=a_n(I_d:W_2^{\ab,\bb}([0,1]^d)\rightarrow
L_2([0,1]^d)).$$ We also write
$$A(m,\ab,\bb,d)=(m+C_{\ab,\bb,d})^{\frac{p_d}{2g_d(\bb)}}\,{\rm
vol}(B_{\ab,2\bb}^d),$$ and
$$B(m,\ab,\bb,d)=(m-C_{\ab,\bb,d})_+^{\frac{p_d}{2g_d(\bb)}}\,{\rm
vol}(B_{\ab,2\bb}^d).$$

On the one hand, we suppose that $A(m,\ab,\bb,d)<n\leq
A(m+1,\ab,\bb,d)$. By \eqref{3.6} we get $$n>C(m,\ab,\bb,d),$$
which implies $$a_n(I_d)\leq(1+m^{p_d})^{-\frac{1}{2}}.$$ It
follows that
\begin{align*}n^{g_d(\bb)}a_n(I_d)&\leq\frac{(A(m+1,\ab,\bb,d))^{g_d(\bb)}}{(1+m^{p_d})^{\frac{1}{2}}}\\
&\leq\frac{(m+1+C_{\ab,\bb,d})^{\frac{p_d}2}\,({\rm vol}
(B_{\ab,2\bb}^d))^{g_d(\bb)}}{(1+m^{p_d})^{\frac{1}{2}}}
\end{align*}
Obviously, we have
$$\lim_{m\to\infty}\frac{(m+1+C_{\ab,\bb,d})^{\frac{p_d}{2}}}{(1+m^{p_d})^{\frac{1}{2}}}=1,$$
which implies that
\begin{equation}\label{3.7}\lim_{n\to\infty} n^{g_d(\bb)}a_n(I_d)\leq({\rm vol}
(B_{\ab,2\bb}^d))^{g_d(\bb)}.\end{equation}

On the other hand, we suppose that $B(m,\ab,\bb,d)\le n<
B(m+1,\ab,\bb,d)$ as $n$ is sufficiently large. By \eqref{3.6} we
get $$n< C(m+1,\ab,\bb,d),$$ which implies
$$a_n(I_d)\geq(1+(m+1)^{p_d})^{-\frac{1}{2}}.$$ It follows that
\begin{align*}n^{{g_d(\bb)}}a_n(I_d)&\geq\frac{(B(m,\ab,\bb,d))^{{g_d(\bb)}}}{(1+(m+1)^{p_d})^{\frac{1}{2}}}\\
&\geq\frac{(m-C_{\ab,\bb,d})^{\frac{p_d}{2}}}{(1+(m+1)^{p_d})^{\frac{1}{2}}}\,({\rm
vol}(B_{\ab,2\bb}^d))^{g_d(\bb)}.
\end{align*}
We have
$$\lim_{m\to\infty}\frac{(m-C_{\ab,\bb,d})^{\frac{p_d}{2}}}{(1+(m+1)^{p_d})^{\frac{1}{2}}}=1,$$
which implies that
\begin{equation}\label{3.8}\lim_{n\to\infty} n^{{g_d(\bb)}}a_n(I_d)\geq(vol(B_{\ab,2\bb}^d))^{{g_d(\bb)}}.\end{equation}
Combining \eqref{3.7} with \eqref{3.8},  we obtain \eqref{2.8}.

 Theorem 2.2 is proved. $\hfill\Box$

\section{Tractability results of weighted anisotropic embeddings}

In this section, we first give the proof of Theorem 2.7. Next
 we establish  the relationship between
tractability of  $I=\{I_d\}_{d\in\Bbb N}$ and EC-tractability of
${\rm APP}=\{{\rm APP}_d\}_{d\in\Bbb N}$ based on Theorem 2.7,
where $I_d$ and ${\rm APP}_d$ are given by \eqref{1.1} and
\eqref{1.2}. Finally we show Theorems 2.8 and 2.9.

\vskip 2mm

 \noindent{\it Proof of Theorem 2.7.}

\vskip 1mm

 For any $\varepsilon
\in(0,1), \ \ d\in \Bbb N,$ by \eqref{2.6-1}, \eqref{2.5}, and
\eqref{2.6} we have
\begin{align}n(\varepsilon,I_d)
&=\min\{n:a_{n+1}(I_d:W_2^{\ab,\bb}\rightarrow
L_2([0,1]^d))\leq\varepsilon\}\notag \\
&=\min\{n:W_{\ab,\bb,d}^*(n+1)\leq\varepsilon\},\label{4.1}\end{align}and
\begin{align}n(\varepsilon,{\rm APP}_d)&=\min\{n:a_{n+1}({\rm
APP}_d:H(K_{d,\ab,2\bb})\rightarrow
L_2([0,1]^d))\leq\varepsilon\}\notag\\ &=\min\{n :
\lambda_{d,n+1}\leq\varepsilon^2\},\label{4.2}\end{align} where
$\{W_{\ab,\bb,d}^*(l)\}_{l=1}^\infty$ is the non-increasing
rearrangement of$$\Big\{\Big(1+\sum_{j=1}^d a_j\vert
k_j\vert^{2b_j}\Big)^{-\frac{1}{2}}\Big\}_{\kb=(k_1,\dots,k_d)\in
\Bbb Z^d},$$ and $\{\lz_{d,k}\}_{k=1}^\infty$ is  the
non-increasing rearrangement of $$\{\omega_{\kb}\}_{\kb\in \Bbb
Z^d}=\Big\{ \oz^{\sum_{j=1}^d a_j\vert
k_j\vert^{2b_j}}\Big\}_{\kb=(k_1,\dots,k_d)\in \Bbb Z^d}$$with the
fixed $\oz\in (0,1)$.

 For any $\varepsilon_1\in(0,1)$, by  \eqref{4.2} we have
\begin{align*}n(\varepsilon_1,{\rm APP}_d)
&=\min\{n:\lambda_{d,n+1}\leq\varepsilon_1^2\}=\#\{j\in\Bbb N:\lambda_{d,j}>\varepsilon_1^2\}\\
&=\#\{\kb\in \Bbb Z^d:\oz^{\sum_{j=1}^d a_j\vert k_j\vert^{2b_j}}>\varepsilon_1^2\}\\
&=\#\{\kb\in\Bbb Z^d:\oz^{-\sum_{j=1}^da_j\vert k_j\vert^{2b_j}}<\varepsilon_1^{-2}\}\\
&=\#\Big\{\kb\in\Bbb Z^d:\sum_{j=1}^d a_j\vert
k_j\vert^{2b_j}<\frac{\ln \varepsilon_1^{-2}}{\ln \oz^{-1}}\Big\}.
\end{align*}
We set $$\varepsilon_2=\Big(\frac{\ln \varepsilon_1^{-2}}{\ln
\oz^{-1}}+1\Big)^{-\frac{1}{2}}.$$ Then $\vz_2\in (0,1)$ and
$$\varepsilon_2^{-2}-1= \frac{\ln \varepsilon_1^{-2}}{\ln
\oz^{-1}}.$$ By \eqref{4.1} we obtain
\begin{align*}n(\varepsilon_2,I_d)&=\min\{n:W_{\ab,\bb,d}^*(n+1)\leq\varepsilon_2\}
=\#\{l\in\Bbb N:W_{\ab,\bb,d}^*(l)>\varepsilon_2\}
\\ &=\#\Big\{\kb\in\Bbb Z^d: \Big(1+\sum_{j=1}^d a_j\vert
k_j\vert^{2b_j}\Big)^{-\frac{1}{2}}>\varepsilon_2\Big\}
\\&=\#\Big\{\kb\in\Bbb Z^d:\sum_{j=1}^d a_j\vert k_j\vert^{2b_j}<\varepsilon_2^{-2}-1\Big\}\\
&=\#\{\kb\in\Bbb Z^d:\sum_{j=1}^d a_j\vert
k_j\vert^{2b_j}<\frac{\ln \varepsilon_1^{-2}}{\ln \oz^{-1}}\Big\}.
\end{align*}It follows that
$$n(\varepsilon_1,{\rm APP}_d)=n(\varepsilon_2,I_d)=n\Big(\Big(\frac{\ln \varepsilon_1^{-2}}
{\ln \oz^{-1}}+1\Big)^{-\frac{1}{2}},I_d\Big),$$which gives
\eqref{2.11}. Using the same method we can prove \eqref{2.12}.

Theorem 2.7 is proved. $\hfill\Box$

\begin{thm} Consider the approximation problems $I=\{I_d\}$ and
${\rm APP}=\{{\rm APP}_d\}$ in the worst case setting, where $I_d$
and ${\rm APP}_d$ are given by \eqref{1.1} and \eqref{1.2}.  We
have

$(\romannumeral 1)$  for fixed  $s,t>0$,  $I$ is $(2s,t)$-WT iff
${\rm APP}$ is EC-$(s,t)$-WT;

$(\romannumeral 2)$    $I$ is UWT iff ${\rm APP}$ is EC-UWT;

$(\romannumeral 3)$ $I$ is QPT iff ${\rm APP}$  is EC-QPT;

$(\romannumeral 4)$ $I$ is PT iff ${\rm APP}$  is EC-PT;

$(\romannumeral 5)$ $I$ is SPT iff ${\rm APP}$  is EC-SPT.
\end{thm}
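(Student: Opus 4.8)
The plan is to exploit Theorem 2.7, which gives the exact identities
\begin{equation*}
n(\vz,{\rm APP}_d)=n\Big(\Big(\tfrac{\ln\vz^{-2}}{\ln\oz^{-1}}+1\Big)^{-1/2},I_d\Big),
\qquad
n(\vz,I_d)=n\big(\oz^{(\vz^{-2}-1)/2},{\rm APP}_d\big),
\end{equation*}
and translate each of the five tractability notions for $I$ into the matching EC-notion for ${\rm APP}$ by substituting these expressions and checking that the relevant asymptotic or boundedness condition is preserved. The central computational fact is the dictionary between the two error parametrizations: if we write $\delta=\big(\tfrac{\ln\vz^{-2}}{\ln\oz^{-1}}+1\big)^{-1/2}$, then $\delta^{-2}=\tfrac{2\ln\vz^{-1}}{\ln\oz^{-1}}+1$, so $\delta^{-1}\asymp (1+\ln\vz^{-1})^{1/2}$ with constants depending only on $\oz$; equivalently $1+\ln\delta^{-1}\asymp \ln(1+\ln\vz^{-1})$ and $(\delta^{-1})^{2s}\asymp(1+\ln\vz^{-1})^{s}$ up to $\oz$-dependent constants. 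This is exactly the bridge that converts the polynomial-in-$\vz^{-1}$ (resp. $\ln\vz^{-1}$, $\ln\ln\vz^{-1}$) growth appearing in the definitions of $(2s,t)$-WT, UWT, QPT, PT, SPT for $I$ into the polynomial-in-$(1+\ln\vz^{-1})$ (resp. $\ln(1+\ln\vz^{-1})$) growth in the definitions of EC-$(s,t)$-WT, EC-UWT, EC-QPT, EC-PT, EC-SPT for ${\rm APP}$.

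Concretely, for part (i) I would argue as follows. Assume $I$ is $(2s,t)$-WT, i.e.\ $\ln n(\vz,I_d)/((\vz^{-1})^{2s}+d^{t})\to 0$ as $\vz^{-1}+d\to\infty$. Given the EC-$(s,t)$-WT limit for ${\rm APP}$, write $n(\vz,{\rm APP}_d)=n(\delta,I_d)$ with $\delta$ as above; since $\delta^{-2s}\le c_\oz (1+\ln\vz^{-1})^{s}$ and $\delta^{-1}+d\to\infty$ precisely when $\vz^{-1}+d\to\infty$, the numerator $\ln n(\delta,I_d)=o(\delta^{-2s}+d^{t})=o\big((1+\ln\vz^{-1})^{s}+d^{t}\big)$, which is the EC-$(s,t)$-WT condition. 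The reverse implication uses the second identity $n(\vz,I_d)=n(\oz^{(\vz^{-2}-1)/2},{\rm APP}_d)$ together with the fact that for $\eta=\oz^{(\vz^{-2}-1)/2}$ one has $1+\ln\eta^{-1}=1+\tfrac{\vz^{-2}-1}{2}\ln\oz^{-1}\asymp\vz^{-2}$, hence $(1+\ln\eta^{-1})^{s}\asymp\vz^{-2s}$ and $\eta^{-1}+d\to\infty$ iff $\vz^{-1}+d\to\infty$; feeding this into the EC-$(s,t)$-WT limit for ${\rm APP}$ yields the $(2s,t)$-WT limit for $I$. Parts (ii)--(v) follow the same template: (ii) is the case "for all $\az,\beta>0$" of (i) with $2s=\az$, $t=\beta$, after checking the substitution $\delta^{-\az}\asymp(1+\ln\vz^{-1})^{\az/2}$ goes through uniformly in the exponents; (iii)--(v) substitute $\delta$ (resp.\ $\eta$) into the QPT, PT, SPT bounds and use $1+\ln\delta^{-1}\asymp\ln(1+\ln\vz^{-1})$ for QPT and $\delta^{-1}\asymp(1+\ln\vz^{-1})^{1/2}$ for PT/SPT, absorbing the $\oz$-dependent constants into the constants $C,p,q,t$ allowed in those definitions, and running the same computation backwards with $\eta$.

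The main obstacle is bookkeeping of constants and the direction of the inequalities at the two "endpoints" of the $\vz$-range: one must make sure that (a) the monotone change of variable $\vz\mapsto\delta$ is a bijection of $(0,1)$ onto a subinterval whose closure includes $0$, so that "$\vz^{-1}\to\infty$" and "$\delta^{-1}\to\infty$" are genuinely equivalent limits, and (b) in the polynomial-tractability cases the substitution produces a bound of exactly the allowed \emph{shape} (e.g.\ $n(\vz,{\rm APP}_d)=n(\delta,I_d)\le C d^q\delta^{-2sp}$ must be rewritten as $\le C' d^q(1+\ln\vz^{-1})^{sp}$, which is legitimate since $\delta^{-2sp}\le c_\oz^{sp}(1+\ln\vz^{-1})^{sp}$). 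Neither of these is deep, but they are where an incorrect proof would slip; once the dictionary $\delta^{-2}\asymp_\oz 1+\ln\vz^{-1}$ and its inverse $1+\ln\eta^{-1}\asymp_\oz\vz^{-2}$ are recorded cleanly, each of the ten implications is a one-line substitution. I would present the $\oz$-dependent comparison lemma once at the start, then dispatch (i)--(v) in that order, doing the WT-type cases (i), (ii) via limits and the polynomial cases (iii), (iv), (v) via explicit bounds.
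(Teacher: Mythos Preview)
Your proposal is correct and follows essentially the same route as the paper: both arguments rest entirely on the two identities of Theorem~2.7 together with the two-sided comparisons $\vz_1^{-2}\asymp_\oz 1+\ln\vz^{-1}$ and $1+\ln\vz_2^{-1}\asymp_\oz \vz^{-2}$ (your ``dictionary''), and then each of the ten implications is obtained by direct substitution into the relevant tractability definition. The only differences are cosmetic---you package the comparison as a lemma stated once, whereas the paper re-derives the bounds \eqref{4.4} and \eqref{4.6} inline; and one small imprecision in your sketch (for QPT the correct comparison is $1+\ln\delta^{-1}\asymp 1+\ln(1+\ln\vz^{-1})$, not $\asymp\ln(1+\ln\vz^{-1})$) is harmless since the additive constant is absorbed into the constants $C,t$ of the QPT/EC-QPT bounds.
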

\begin{proof}$(\romannumeral 1)$ First we prove the sufficiency. Assume that $I$ is
$(2s,t)$-WT. For $\vz\in(0,1)$, we set
\begin{equation}\label{4.3}\varepsilon_1=\Big(\frac{\ln \varepsilon^{-2}}{\ln
\oz^{-1}}+1\Big)^{-\frac{1}{2}}\in(0,1).\end{equation}Then we have
$$d+\vz^{-1}\to\infty\ \ {\rm iff}\ \ d+\vz_1^{-1}\to\infty.$$

By the definition of $(s,t)$-WT we have
$$\lim_{d+\varepsilon_1^{-1}\to\infty}\frac{\ln n(\varepsilon_1 ,I_d)}{d^t+({\varepsilon_1^{-1}})^{2s}}=0.$$
It follows from \eqref{2.11} that
\begin{align*}\frac{\ln n(\varepsilon ,{\rm APP}_d)}{d^t+(1+ {\ln \varepsilon^{-1}})^s}
=\frac{\ln n(\vz_1, I_d)}{d^t+\vz_1^{-2s}}\ \frac {d^t+(\frac{\ln
\varepsilon^{-2}}{\ln \oz^{-1}}+1)^s}{d^t+(1+{\ln
\varepsilon^{-1}})^s}.
\end{align*}
Noting that
\begin{equation}\label{4.4}\vz_1^{-2}=\frac{\ln
\varepsilon^{-2}} {\ln \oz^{-1}}+1\le C_1 (1+\ln
\varepsilon^{-1}),\end{equation}where $C_1=\max\{1,\frac2{\ln
\oz^{-1}}\}$, we get
$$\frac{d^t+(\frac{\ln
\varepsilon^{-2}} {\ln \oz^{-1}}+1)^s}{d^t+(1+{\ln
\varepsilon^{-1}})^s}\le \frac{d^t+C_1^s(1+{\ln
\varepsilon^{-1}})^s}{d^t+(1+{\ln \varepsilon^{-1}})^s} \le
C_1^s.$$

It follows that
$$\lim_{d+\varepsilon^{-1}\to\infty}\frac{\ln n(\varepsilon ,
{\rm APP}_d)}{d^t+(1+{\ln \varepsilon^{-1}})^s}=0,$$ which yields
that ${\rm APP}$ is EC-$(s,t)$-WT.

Next we show the necessity. Suppose  that ${\rm APP}$ is
EC-$(s,t)$-WT. For $\vz\in(0,1)$, we set
\begin{equation}\label{4.5}\varepsilon_2=\oz^{\frac{\varepsilon^{-2}-1}{2}}\in(0,1).\end{equation}
Then we have  $$d+\vz^{-1}\to\infty \ \ {\rm iff}\ \
d+\vz_2^{-1}\to\infty.$$ By the definition of EC-$(s,t)$-WT we
have
 $$\lim_{d+\varepsilon_2^{-1}\to\infty}\frac{\ln n(\varepsilon_2,{\rm APP}_d)}{d^t+(1+{\ln \varepsilon_2^{-1}})^s}=0.$$
By \eqref{2.12}, we have
\begin{align*}\frac{\ln n(\varepsilon,I_d)}{d^t+({\varepsilon^{-1}})^{2s}}
=\frac{\ln n(\varepsilon_2,{\rm APP}_d)}{d^t+(1+{\ln
\varepsilon_2^{-1}})^s}\ \frac{d^t+
(1+\frac{\varepsilon^{-2}-1}{2}\ln\oz^{-1})^s}{d^t+({\varepsilon^{-1}})^{2s}}.
\end{align*}
Noting that\begin{equation}\label{4.6}
1+\ln\vz_2^{-1}=1+\frac{\varepsilon^{-2}-1}{2}\ln\oz^{-1}\le
1+\vz^{-2}\ln\oz^{-1}\le C_2\vz^{-2},\end{equation}where
$C_2=1+\ln\oz^{-1}$, we obtain
$$\frac{d^t+
(1+\frac{\varepsilon^{-2}-1}{2}\ln\oz^{-1})^s}{d^t+({\varepsilon^{-1}})^{2s}}\le
\frac{d^t+ C_2^s \,\vz^{-2s}}{d^t+\varepsilon^{-2s}}\le C_2^s.
$$ It follows that  $$\lim_{d+\varepsilon^{-1}\to\infty}\frac{\ln
n(\varepsilon ,I_d)} {d^t+({\varepsilon^{-1}})^{2s}}=0,$$ which
leads to that $I$ is $(2s,t)$-WT. Hence ({\it i}) is proved.
 \vskip 1mm
$(\romannumeral 2)$ We note that  $I$ is UWT iff  for all
$\alpha,\beta>0$, $I$ is $(\alpha,\beta)$-WT. By (i) we obtain
that $I$ is UWT iff
  for all
$\alpha,\beta>0$, ${\rm APP}$ is EC-$(\frac{\alpha}{2},\beta)$-WT.
This is equivalent to that ${\rm APP}$ is EC-UWT. Hence ({\it ii})
is proved.
 \vskip 1mm
$(\romannumeral 3)$ We first assume that $I$ is QPT. Then there
exist two constants $C,t>0$ such that for all $d\in \Bbb N, \ \va
\in(0,1)$,
\begin{equation*}
n(\va ,I_d)\leq C\exp[t(1+\ln\va ^{-1})(1+\ln d)].
\end{equation*}
By Theorem 2.7 and \eqref{4.4} we have
\begin{align*} n(\varepsilon,{\rm APP}_d)&=n\Big(\vz_1,I_d\Big)\leq C\exp[t(1+\ln\va_1 ^{-1})(1+\ln d)]
\\&\leq C \exp\Big\{t\Big[1+\frac12\ln(C_1(1+\ln\vz^{-1}))\Big]\big(1+\ln d\big)\Big\}
\\&\le C \exp\Big\{t_1\,\big[1+\ln(1+\ln
\varepsilon^{-1})\big]\big(1+\ln d\big)\Big\},
\end{align*}
where $\vz_1$ is given by \eqref{4.3}, $t_1=t(1+\frac{\ln C_1}2)$,
$C_1=\max\{1,\frac2{\ln \oz^{-1}}\}$. This means that ${\rm APP}$
is EC-QPT.

Next, we assume  that  ${\rm APP}$ is EC-QPT. We want to show that
$I$ is QPT. There exist two constants $C,t>0$ such that for all
$d\in \Bbb N, \ \va \in(0,1)$,
\begin{equation*}
n(\va ,{\rm APP}_d)\leq C\exp\{t[1+\ln(1+\ln \va ^{-1})](1+\ln d)\}.
\end{equation*}
According to Therorem 2.7 and \eqref{4.6}, we obtain
\begin{align*}n(\varepsilon,I_d)&=n(\vz_2,{\rm APP}_d)\le  C\exp\big\{t[1+\ln(1+\ln \va_2 ^{-1})](1+\ln d)\big\}\\&
\le  C\exp\big \{t[1+\ln(C_2\vz^{-2})](1+\ln d)\big\} \\&\le
C\exp\big[t_2(1+\ln\vz^{-1}))(1+\ln d)\big],
\end{align*}where $\vz_2$ is given by \eqref{4.5}, $t_2=t\max\{1+\ln C_2,2\}$, $C_2=1+\ln\oz^{-1}$.
This means that $I$ is QPT. Hence ({\it iii}) is proved.

 \vskip 1mm

$(\romannumeral 4)$ First we assume that $I$ is PT. There exist
non-negative numbers $C, p$ and $q$ such that for all $d\in \Bbb
N, \ \va \in(0,1)$,
\begin{equation*}
n(\va ,I_d)\leq Cd^q\va ^{-p}.
\end{equation*}
By Theorem 2.7, \eqref{4.3}, \eqref{4.4} we have
\begin{align*}n(\varepsilon,{\rm APP}_d)=n(\vz_1,I_d)
\leq Cd^q\vz_1^{-p}\le CC_1^{p/2}d^q(1+\ln\vz^{-1})^{p/2},
\end{align*}
which means  that {\rm APP} is EC-PT.

Next, suppose that  APP is EC-PT. There exist non-negative numbers
$C, p$ and $q$ such that for all $d\in \Bbb N, \ \va \in(0,1)$,
\begin{equation*}
n(\va ,{\rm APP}_d)\leq Cd^q(1+\ln \va ^{-1})^p.
\end{equation*}
By Theorem 2.7, \eqref{4.5}, \eqref{4.6} we have
\begin{align*}n(\varepsilon,I_d)=n(\vz_2,{\rm APP}_d)\le Cd^q(1+\ln \va_2 ^{-1})^p\leq C
C_2^pd^q\vz^{-2p},
\end{align*}
which implies that $I$ is PT. Hence ({\it iv}) is proved.
 \vskip 1mm

$(\romannumeral 5)$ The proof is the same as the one of
$(\romannumeral 4)$.

\vskip 1mm The proof of Theorem 4.1 is finished.
\end{proof}

\vskip 2mm

 \noindent{\it Proof of Theorem 2.8.}

\vskip 1mm

According to  \cite{IKPW} and \cite{Wangh}, we have  the following
EC-tractability  results  of
  ${\rm APP}$:

  \vskip 2mm

 $\bullet$  EC-SPT holds iff EC-PT holds iff
$$\sum_{j=1}^\infty b_j^{-1}<\infty  \qquad {\rm and}\qquad\underset{j\to\infty}{\underline{\lim}}\frac{\ln a_j}{
j}>0.$$

\vskip 1mm

 $\bullet$  EC-QPT holds iff $$\sup_{d\in \Bbb N}\frac{\sum_{j=1}^d b_j^{-1}}{1+\ln d}<\infty\qquad {\rm and}\qquad
 \underset{j\to\infty}{\underline{\lim}}\frac{(1+\ln j)\ln a_j}{
j}>0.$$

\vskip 1mm

 $\bullet$  EC-UWT holds iff $$\lim_{j\to\infty}\frac{\ln a_j}{\ln j}=\infty.$$

\vskip 1mm

$\bullet$ EC-$(s,t)$-WT with $\max(s,t)>1$ always holds.

\vskip 3mm

 $\bullet$ EC-WT  holds  iff  $$\lim\limits_{j\to \infty}a_j=\infty.$$

\vskip 1mm

 $\bullet$ EC-$(1,t)$-WT with $t<1$ holds iff
\begin{equation*}\lim\limits_{j\to\infty}\frac{ a_j}{\ln j}=\infty.\end{equation*}

\vskip 1mm

 $\bullet$  EC-$(s,t)$-WT with $s<1$ and $t\leq 1$  holds iff
\begin{equation*}\lim\limits_{j\to\infty}\frac{ a_j}{j^{(1-s)/s}}=\infty.\end{equation*}

\vskip 1mm

Hence, Theorem 2.8 follows from  Theorem 4.1 and the above results
immediately.

 $\hfill\Box$

\vskip 2mm

 \noindent{\it Proof of Theorem 2.9.}

\vskip 1mm

We note that if $\tilde \bb=\{\tilde b_j\}$, $\tilde
b_j=(2\pi)^{2b_j},\ j\in\Bbb N$, then
$$W_2^{\bb}([0,1]^d)=W_2^{\tilde\bb,\bb}([0,1]^d).$$It follows from
Theorem 2.8 that

\noindent(1) $\tilde I$ is  SPT  iff $\tilde I$ is PT  iff
$$\sum\limits_{j=1}^\infty b_j^{-1}<\infty\qquad {\rm and}\qquad\underset{j\to\infty}{\underline{\lim}}\frac{b_j}{
j}>0;$$

 \noindent(2)  $\tilde I$ is QPT  iff  $$\sup\limits_{d\in \Bbb
N}\frac{\sum_{j=1}^d b_j^{-1}}{1+\ln d}<\infty  \qquad {\rm
and}\qquad
 \underset{j\to\infty}{\underline{\lim}}\frac{(1+\ln j)\,b_j}{
j}>0;$$

 \noindent (3) ({\it iii}), ({\it iv}), ({\it v}), ({\it
vi}), ({\it vii}), ({\it viii}) hold.

\

Hence, in order to show  Theorem 2.9, it suffices to prove that

\vskip 2mm

(a) if $ \sum\limits_{j=1}^\infty b_j^{-1}<\infty$, then  we have
$\underset{j\to\infty}{\underline{\lim}}\frac{b_j}{ j}>0;$

\vskip 2mm

(b) if $\sup\limits_{d\in \Bbb N}\frac{\sum_{j=1}^d
b_j^{-1}}{1+\ln d}<\infty$, then
$\underset{j\to\infty}{\underline{\lim}}\frac{(1+\ln j)\,b_j}{
j}>0.$

\vskip 3mm

First  we prove (a). Assume that $B:= \sum\limits_{j=1}^\infty
b_j^{-1}<\infty$. We have
$$B\ge \sum\limits_{j=1}^k
b_j^{-1}\ge kb_k^{-1}.$$ It follows that
$$\frac{b_k}k\ge \frac1B,$$which yields that $$\underset{k\to\infty}{\underline{\lim}}\frac{b_k}{
k}\ge \frac1B>0.$$ Hence (a) holds.

Next we show (b).  Assume that $D:=\sup\limits_{d\in \Bbb
N}\frac{\sum_{j=1}^d b_j^{-1}}{1+\ln d}<\infty$. We have
$$D\ge \frac{\sum_{j=1}^k b_j^{-1}}{1+\ln k}\ge \frac{k b_k^{-1}}{1+\ln k}.
$$ It follows that
$$\frac{(1+\ln k)b_k}k\ge \frac1D,$$which yields that $$\underset{k\to\infty}{\underline{\lim}}\frac{(1+\ln k)\,b_k}{
k}\ge \frac1D>0.$$ Hence (b) holds.

This completes the proof of Theorem 2.9. $\hfill\Box$

\

\noindent{\bf Acknowledgment}  This work was supported by the
National Natural Science Foundation of China (Project no.
11671271) and
 the  Natural Science Foundation of Beijing Municipality (1172004).

\end{document}